\def\dj{d\kern-0.4em\char"16\kern-0.1em}
\def\Dj{\mbox{\raise0.3ex\hbox{-}\kern-0.4em D}}
\def\be{\begin{equation}}
\def\ee{\end{equation}}
\def\bena{\begin{eqnarray*}}
\def\ena{\end{eqnarray*}}
\def\t{\tau}
\def\s{\sigma}
\def\suml{\sum\limits}
\def\dss{\displaystyle}
\newcommand{\WF}{\operatorname{WF}}
 \def\D{\mathcal{D}}
 \def\E{\mathcal{E}}
 \def\Rd{\mathbf{R}^d}
 \def\Z{\mathbf{Z}_+}
\def\N{\mathbf{N}}
\def\lf {\lfloor}
\def\rf{\rfloor}
\numberwithin{equation}{section}
\newtheorem{te}{Theorem}[section]
\newtheorem{lema}{Lemma}[section]
\newtheorem{prop}{Proposition}[section]
\newtheorem{cor}{Corollary}[section]
\theoremstyle{definition}
\newtheorem{de}{Definition}[section]
\theoremstyle{remark}
\newtheorem{rem}{Remark}[section]
\title{\textbf{A Paley-Wiener theorem in extended Gevrey regularity}}
\author{Stevan Pilipovi\' c}
\address{Department of Mathematics and Informatics,
University of Novi Sad, Novi Sad, Serbia}
\email{stevan.pilipovic@dmi.uns.ac.rs}
\author{Nenad Teofanov}
\address{Department of Mathematics and Informatics,
University of Novi Sad, Novi Sad, Serbia}
\email{nenad.teofanov@dmi.uns.ac.rs}
\author{Filip Tomi\'c}
\address{Faculty of Technical Sciences,
University of Novi Sad, Novi Sad, Serbia}
\email{filip.tomic@uns.ac.rs}
\keywords{Ultradifferentiable functions, Paley-Wiener theorem,
ultradistributions, associated functions, wave front sets}
\subjclass[2000]{46F05, 35A18, 46F12}
\begin{document}
\begin{abstract}
In this paper we introduce appropriate associated function to the sequence $M_p=p^{\t p^{\s}}$, $p\in \N$, $\t>0$, $\s>1$,
and derive  its sharp asymptotic estimates in terms of the Lambert $W$ function.
These estimates are used to prove a Paley-Wiener type theorem for  compactly supported
functions from extended Gevrey classes.
As an application, we discuss properties of the corresponding wave front sets.
\end{abstract}

\maketitle

\par

\section{Introduction}

{\em Paley-Wiener type theorems} describe relationship between the rate of decay at infinity for functions (distributions)
and regularity properties of their Fourier transforms.  In its simplest form these type of theorems state the following: if $\varphi$ is a smooth compactly supported function, then its Fourier transform $\widehat \varphi (\xi)$ decreases at infinity faster than $\dss (1+|\xi|)^{-N}$ for any $N\in \N$. These results give important theoretical insight to the objects under the study, and
could be applied for example in qualitative analysis of (hyperbolic) partial differential equations
(cf. \cite[Chapter 7.3]{HermanderKnjiga}, \cite[Chapter 7.2]{Str}), and structure theorems (cf. \cite{Komatsuultra1}).
Associated function to a given positive and increasing sequence $M_p$, $p\in\N$, plays an essential roll for the proofs of Paley-Wiener type theorems in the context of ultradifferentiable classes of functions (\cite{GelfandShilov, Komatsuultra1, Rodino}).
For example, if $\varphi$ is a compactly supported Gevrey function of Roumieu (resp. Beurling) type with index $\t>1$ (cf. \cite{KomatsuNotes, Rodino}) then
\be
\label{prvaocena}
|\widehat\varphi (\xi)|\leq A e^{-B |\xi|^{1/\t}},\quad \xi\in \Rd,
\ee
for some constants $A,B>0$ (resp. for every $B>0$ there exists $A>0$). The function $B |\xi|^{1/\t}$ (cf. \cite{GelfandShilov, KomatsuNotes}) in the exponent of \eqref{prvaocena} precisely describes the asymptotic behavior of the associated function to the Gevrey sequence $M_p=p^{\t p}$, $p\in \N$.

Classes of \emph{extended Gevrey functions} and their defining sequences  $M^{\t,\s}_p=p^{\t p^{\s}}$, $p\in \N$, $\t>0$, $\s>1$, are introduced and investigated in \cite{PTT-01, PTT-02, PTT-03, TT0, TT, FT}. Such classes are of interest for the analysis of certain
strictly hyperbolic equations. We refer to \cite{CL}, where the the sequence $M^{1,2}_p=p^{p^2}$
(and the corresponding associated function) gives rise to the well posedness of the related Cauchy problem.

The aim of this paper is to extend the notion of the associated function to sequences $M^{\t,\s}_p =p^{\t p^{\s}}$, $\t>0$, $\s>1$, and give an estimate which can be used in the proof of Paley-Wiener type results for the extended Gevrey classes.

The paper is organized as follows. We end this section with some notation, and the definition and
basic facts on the Lambert $W$ function (cf. \cite{LambF}) which is the main technical tool in our investigations.
Section \ref{sec01} contains  the basic facts concerning the spaces of ultradifferentiable functions defined by the means of sequences
$M^{\t,\s}_p =p^{\t p^{\s}}$, $\t>0$, $\s>1$. These classes of smooth functions
contain Gevrey classes, and for that reason we refer to them as to extended Gevrey classes, see Remark \ref{Gevrey-ext-Gevrey}.
We introduce appropriate associated function $T_{\t,\s,h}(k)$ for the sequence $p^{\t p^{\s}}$, $p\in \N$ in Subsection \ref{subsectionT},
and,  as the first  main result of the paper, we derive the essential (sharp) estimates for its asymptotic behavior,  Theorem \ref{propozicija}.
In Section \ref{PaleyWiener} we  use the asymptotic estimates from Theorem \ref{propozicija} to
prove a Paley-Wiener type theorem for $\D_{\t,\s}(U)$ (see Theorem \ref{PaleyWiener} and its Corollary \ref{Cor:PaleyWiener}), which is our second main result.
As an application, in Section \ref{section:WFS} we introduce wave front sets related to the extended Gevrey classes and prove Theorem \ref{NezavisnostWFThm} which states that the definition of such wave front sets is independent on the choice of the cutoff function  $\phi\in \D_{\t,\s}^K $. We present the proof of
 Theorem \ref{propozicija} in Section \ref{Sec:Proof}.

\subsection{Preliminaries}\label{Notacija}
We denote by ${\bf N}$, $\Z$, ${\bf R}$, ${\bf C}$ the sets of nonnegative
integers, positive integers, real numbers and complex numbers, respectively. For $x \in \Rd$
we put  $\langle x \rangle=(1+|x|^2)^{1/2}$.
The floor and the ceiling functions, the integer parts of  $x\in {\bf R}_+$,
are denoted by $\lf x \rf:=\max\{m\in \N\,:\,m\leq x\}$ and $\lceil x \rceil:= \min\{m\in \N\,:\,m\geq x\}$, respectively. For a multi-index
$\alpha=(\alpha_1,\dots,\alpha_d)\in {\bf N}^d$ we write
$\partial^{\alpha}=\partial^{\alpha_1}\dots\partial^{\alpha_d}$, $\dss D^{\alpha}= (-i )^{|\alpha|}\partial^\alpha$, and
$|\alpha|=|\alpha_1|+\dots |\alpha_d|$.
We put  $ A\lesssim B $ when $ A \leq C \cdot B $ for some positive constant $C$.
We write $ A \asymp B $ if $ A\lesssim B $  and  $ B\lesssim A $.

The Fourier-Laplace transform is denoted by
$$\dss \widehat u(\eta)=\int_{\Rd} u(x)e^{ i x\cdot\eta}\,dx  \;\;\; \eta \in {\mathbf C}^d,\quad u\in L^1 (\Rd). $$
For locally convex topological spaces   $X$ and $Y$ we write $X\hookrightarrow Y$
when $X\subseteq Y$ and the identity mapping from $X$ to $Y$ is continuous.

\par

The Lambert $W$ function is defined as the inverse function of $z e^{z}$, $z\in {\bf C}$, wherefrom the following property holds:
\be
\label{osobinaLambert}
\dss x=W(x)e^{W(x)}, \quad x\geq 0.
\ee
We denote its principal (real) branch by $W(x)$, $x\geq 0$ (see \cite{LambF}).
It is a continuous,  increasing and concave function on $[0,\infty)$, $W(0)=0$, $W(e)=1$, and $W(x)>0$, $x>0$.

It can be shown that $W$ can be represented in the form of the absolutely convergent series
$$
W(x)=\ln x-\ln (\ln x)+\sum_{k=0}^{\infty}\sum_{m=1}^{\infty}c_{km}\frac{(\ln(\ln x))^m}{(\ln x)^{k+m}},\quad x\geq x_0>e,
$$
with suitable constants $c_{km}$ and  $x_0 $, wherefrom  the following  estimates hold:

\be
\label{sharpestimateLambert}
\ln x -\ln(\ln x)\leq W(x)\leq \ln x-\frac{1}{2}\ln (\ln x), \quad x\geq e.
\ee
The equality in \eqref{sharpestimateLambert} holds if and only if $x=e$.
We refer to \cite{HoHa, LambF} for more details about the Lambert $W$ function.

\section{Extended Gevrey classes ${\E}_{\t, \s}(U)$ and ${\D}_{\t, \s}(U)$}
\label{sec01}

We employ Komatsu's approach \cite{Komatsuultra1} to spaces of
ultradifferentiable functions and recall the definition of test function spaces denoted by  ${\E}_{\t, \s}(U)$
and ${\D}_{\t, \s}(U)$
via defining sequences of the form
$M_p^{\t,\s}=p^{\t p^{\s}}$, $p\in \N $, depending on parameters $\t>0$ and  $\s>1$, \cite{PTT-02}.
The flexibility obtained by introducing the two-parameter dependence
enables the study of smooth functions which are less regular than the Gevrey functions.
When $\t>1$ and  $\s=1$, we  recapture the usual  Gevrey classes.

First we recall the essential properties of the defining sequences $M_p^{\t,\s}$.
We refer to \cite{PTT-01} for the proof of the next Lemma.

\begin{lema}
\label{osobineM_p_s}
Let $\tau>0$, $\s>1$ and $M_p^{\tau,\s}=p^{\tau p^{\s}}$, $p\in \Z$, $M_0^{\tau,\s}=1$.
Then there exists an  increasing sequence of positive numbers $C_q$, $q\in \N$, and a constant $C>0$ such that:
\vspace{2mm}\\
\vspace{1mm}
$(M.1)$ $(M_p^{\t,\s})^2\leq M_{p-1}^{\t,\s}M_{p+1}^{\t,\s}$, $p\in \Z$\\
\vspace{1mm}
$\overline{(M.2)}$ $M_{p+q}^{\t,\s}\leq C^{p^{\s}+q^{\s}}M_p^{\t 2^{\s-1},\s}M_q^{\t 2^{\s-1},\s}$, $p,q\in \N$,\\
\vspace{1mm}
$\overline{(M.2)'}$ $M_{p+q}^{\t,\s}\leq C_q^{p^{\s}}M_p^{\t,\s}$, $p,q\in \N$,\\
\vspace{1mm}
$(M.3)'$
$ \displaystyle
\suml_{p=1}^{\infty}\frac{M_{p-1}^{\t,\s}}{M_p^{\t,\s}} <\infty.
$ Moreover, $\dss \frac{M_{p-1}^{\t,\s}}{M_p^{\t,\s}}\leq \frac{1}{(2p)^{\tau (p-1)^{\s-1}}}$, $p\in \N$.
\end{lema}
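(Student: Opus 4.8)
The plan is to pass to logarithms throughout and reduce every assertion to an elementary estimate for the single function $f(x) := x^{\s}\log x$, whose discrete increments encode the whole lemma. Writing $a_p := \log M_p^{\t,\s} = \t p^{\s}\log p$ (with $a_0 = a_1 = 0$), property $(M.1)$ amounts to the discrete convexity $a_{p+1} - a_p \ge a_p - a_{p-1}$, i.e. to convexity of $f$. First I would compute $f''(x) = x^{\s-2}\bigl(\s(\s-1)\log x + 2\s - 1\bigr)$, which is strictly positive for $x \ge 1$ when $\s > 1$; hence $f$ is convex on $[1,\infty)$ and $a_{p+1} - a_p = \t\int_p^{p+1} f' \ge \t\int_{p-1}^p f' = a_p - a_{p-1}$ for $p \ge 2$, while $p = 1$ is checked directly from $M_0^{\t,\s} = M_1^{\t,\s} = 1$. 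Exponentiating gives $(M.1)$.

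For $\overline{(M.2)}$ I would start from $\log M_{p+q}^{\t,\s} = \t(p+q)^{\s}\log(p+q)$ and apply the convexity bound $(p+q)^{\s} \le 2^{\s-1}(p^{\s}+q^{\s})$, valid since $\s \ge 1$. It then remains to split $\log(p+q)$: writing $\log(p+q) = \log p + \log(1+q/p)$ in the $p^{\s}$-term and $\log(p+q) = \log q + \log(1+p/q)$ in the $q^{\s}$-term produces exactly $\t 2^{\s-1}\bigl(p^{\s}\log p + q^{\s}\log q\bigr) = \log\bigl(M_p^{\t 2^{\s-1},\s}M_q^{\t 2^{\s-1},\s}\bigr)$, plus a remainder $\t 2^{\s-1}\bigl(p^{\s}\log(1+q/p) + q^{\s}\log(1+p/q)\bigr)$. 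The point is that each remainder term obeys $x^{\s}\log(1+y/x) \le C_{\s}(x^{\s}+y^{\s})$ with a constant depending only on $\s$, because $\log(1+r)/(1+r^{\s})$ is bounded on $[0,\infty)$ (it vanishes at $0$ and at $\infty$ and is continuous); this bounds the remainder by $(p^{\s}+q^{\s})\log C$, which is the claimed factor $C^{p^{\s}+q^{\s}}$.

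Property $\overline{(M.2)'}$ is stability under a fixed shift $q$, and here I would argue asymptotically: for fixed $q$ the difference $(p+q)^{\s}\log(p+q) - p^{\s}\log p$ is $O(p^{\s-1}\log p) = o(p^{\s})$ as $p \to \infty$ (expand $(1+q/p)^{\s}$ and $\log(1+q/p)$), and it is continuous and bounded on the finitely many remaining values. Hence the supremum of $\bigl((p+q)^{\s}\log(p+q) - p^{\s}\log p\bigr)/p^{\s}$ over $p \in \Z$ is finite, and setting $\log C_q := \t$ times this supremum gives $\overline{(M.2)'}$; replacing $C_q$ by $\max_{j\le q}C_j$ renders the sequence increasing without affecting the inequality.

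Finally, the heart of the lemma is the sharp ratio estimate in $(M.3)'$, which after taking logarithms and dividing by $\t$ becomes
\[
p^{\s}\log p - (p-1)^{\s}\log(p-1) \ge (p-1)^{\s-1}\log(2p), \qquad p \ge 2.
\]
I would bound the left side from below using $p^{\s} \ge (p-1)^{\s} + \s(p-1)^{\s-1}$ (convexity of $x^{\s}$), obtaining
\[
p^{\s}\log p - (p-1)^{\s}\log(p-1) \ge (p-1)^{\s}\log\tfrac{p}{p-1} + \s(p-1)^{\s-1}\log p,
\]
and then divide by $(p-1)^{\s-1}$ to reduce the claim to $(p-1)\log\bigl(1+\tfrac{1}{p-1}\bigr) + (\s-1)\log p \ge \log 2$. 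The main obstacle is precisely this last step for small $p$: the tempting bound $\log\bigl(1+\tfrac{1}{p-1}\bigr) \ge \tfrac1p$ is too lossy at $p = 2$ and destroys the inequality for $\s$ near $1$, so instead one must use that $u\log(1+1/u)$ is increasing for $u \ge 1$ with minimal value $\log 2$ at $u = 1$; this yields $(p-1)\log\bigl(1+\tfrac{1}{p-1}\bigr) \ge \log 2$ on the nose, and $(\s-1)\log p \ge 0$ finishes it. Summability $\suml_p M_{p-1}^{\t,\s}/M_p^{\t,\s} < \infty$ is then immediate, since $(p-1)^{\s-1} \to \infty$ forces the terms $(2p)^{-\t(p-1)^{\s-1}}$ to decay faster than any geometric sequence.
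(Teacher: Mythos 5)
The paper itself offers no proof of this lemma---it refers to \cite{PTT-01}---so there is no in-paper argument to compare against; your proposal must stand on its own, and it essentially does. Reducing everything to the convexity of $f(x)=x^{\sigma}\log x$ is sound: the computation $f''(x)=x^{\sigma-2}\bigl(\sigma(\sigma-1)\log x+2\sigma-1\bigr)>0$ on $[1,\infty)$ gives $(M.1)$ (with $p=1$ checked by hand), the splitting $(p+q)^{\sigma}\le 2^{\sigma-1}(p^{\sigma}+q^{\sigma})$ together with the boundedness of $\log(1+r)/(1+r^{\sigma})$ on $[0,\infty)$ gives $\overline{(M.2)}$, the bound $(p+q)^{\sigma}\log(p+q)-p^{\sigma}\log p=O(p^{\sigma-1}\log p)=o(p^{\sigma})$ for fixed $q$ gives $\overline{(M.2)'}$, and your reduction of the sharp ratio estimate in $(M.3)'$ to $(p-1)\log\bigl(1+\tfrac{1}{p-1}\bigr)+(\sigma-1)\log p\ge\log 2$, settled by the monotonicity of $u\log(1+1/u)$ with minimum value $\log 2$ at $u=1$, is exactly the right way past the small-$p$ obstruction you correctly identified.

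Two minor blemishes, neither fatal. First, your closing justification of summability---that the terms $(2p)^{-\tau(p-1)^{\sigma-1}}$ ``decay faster than any geometric sequence''---is false when $1<\sigma<2$: there $\tau(p-1)^{\sigma-1}\log(2p)=o(p)$, so these terms eventually \emph{exceed} $r^{p}$ for any fixed $0<r<1$. Summability still follows trivially, since $\tau(p-1)^{\sigma-1}\ge 2$ for all large $p$ makes the terms eventually $\le(2p)^{-2}$; say that instead. Second, you silently work with $p,q\ge 1$ in the two $(M.2)$-type properties. For $\overline{(M.2)}$ the cases $p=0$ or $q=0$ are trivial (any $C\ge 1$ works, since $M_p^{\tau,\sigma}\le M_p^{\tau 2^{\sigma-1},\sigma}$), but for $\overline{(M.2)'}$ the inequality as literally stated fails at $p=0$, $q\ge 2$, where it would read $M_q^{\tau,\sigma}\le 1$; restricting to positive $p$, as your supremum implicitly does, is clearly the intended reading, and one sentence acknowledging this would make the argument airtight.
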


Let $\tau,h>0$, $\s>1$ and let $K\subset \subset \Rd$ be a regular compact set.  By ${\E}_{\t, {\s},h}(K)$
we denote
the Banach space of  functions $\phi \in  C^{\infty}(K)$ such that
\begin{equation} \label{Norma}
\| \phi \|_{{\E}_{\t, {\s},h}(K)}=\sup_{\alpha \in \N^d}\sup_{x\in K}
\frac{|\partial^{\alpha} \phi (x)|}{h^{|\alpha|^{\s}}  M_{|\alpha|} ^{\t,\s} }<\infty.\,
\end{equation}
Then
$$ \displaystyle
{\E}_{\t_1, {\s_1},h_1}(K)\hookrightarrow {\E}_{\t_2,
{\s_2},h_2}(K), \;\;\;
0<h_1\leq h_2, \; 0<\t_1\leq\t_2, \; 1<\s_1\leq \s_2,
$$
where $\hookrightarrow$ denotes the strict and dense inclusion.

The set of functions $ \phi \in {\E}_{\t,\s,h}(K)$ whose support is contained in $K$ is denoted by  ${\D}^K_{\t, \s,h}$.

Let $U$ be an open set $\Rd$ and $ K \subset \subset U$. We define families of spaces
by introducing the following projective and inductive limit topologies:
\begin{equation*}
\label{NewClassesInd} {\E}_{\{\t,
\s\}}(U)=\varprojlim_{K\subset\subset U}\varinjlim_{h\to
\infty}{\E}_{\t, {\s},h}(K),
\end{equation*}
\begin{equation*}
\label{NewClassesProj} {\E}_{(\t,
\s)}(U)=\varprojlim_{K\subset\subset U}\varprojlim_{h\to 0}{\E}_{\t,
{\s},h}(K),
\end{equation*}
\begin{equation*}
\label{NewClassesInd2} {\D}_{\{\t,
\s\}}(U)=\varinjlim_{K\subset\subset U} {\D}^K_{\{\t, \s\}}
=\varinjlim_{K\subset\subset U} (\varinjlim_{h\to\infty}{\D}^K_{\t,
\s,h})\,,
\end{equation*}
\begin{equation*}
\label{NewClassesProj2} {\D}_{(\t,
\s)}(U)=\varinjlim_{K\subset\subset U} {\D}^K_{(\t, \s)}
=\varinjlim_{K\subset\subset U} (\varprojlim_{h\to 0}{\D}^K_{\t,
\s,h}).
\end{equation*}
We will use abbreviated notation $ \t,\s $ for
$\{\t,\s\}$ or $(\t,\s)$.
The spaces ${\E}_{\t, \s}(U)$, ${\D}^K_{\t, \s}$ and ${\D}_{\t, \s}(U)$
are nuclear, cf. \cite{PTT-01}.
We refer to \cite{PTT-01, PTT-02, PTT-03, TT0, TT, FT} for other properties of those spaces.

\par

\begin{rem} \label{Gevrey-ext-Gevrey}
If $ \t > 1 $ and $\s = 1$,
then $ {\E}_{\{\t, 1\}}(U)={\E}_{\{\t\}}(U)$  is the Gevrey class,
and $\D_{\{\t,1\}}(U)=\D_{\{\t\}}(U)$
is its subspace of compactly supported functions in $\E_{\{\t\}}(U)$. If $0<\t\leq 1$, then
$ {\E}_{\t, 1}(U)$ consists of quasianalytic functions. In particular, $\dss
\D_{\t,1}(U)=\{0\}$ when $0<\t\leq 1$, and ${\E}_{\{1, 1\}}(U)= {\E}_{\{1\}}(U)$ is the space of analytic functions on $U$.
\end{rem}

The space $ \E_{\{1,2\}}(U)$ appears in \cite{CL} in the study of  strictly hyperbolic equations.
For this class the assumptions that coefficients belong to $ \E_{\{1,2\}}(U)$ with respect to space variable, and with the certain lower regularity in time, imply that the corresponding Cauchy problem is well posed in appropriate solution spaces.

\par

In the following proposition, main embedding properties between the above introduced families
are captured.

\begin{prop}
\label{detectposition} \cite{PTT-02} Let $\s_1\geq 1$. Then for every $\s_2>\s_1$
and $\t>0$
\begin{equation*} \label{Theta_S_embedd}
\varinjlim_{\t\to \infty}{\E}_{\t,
{\s_1}}(U)\hookrightarrow \varprojlim_{\t\to 0^+} {\E}_{\t,
{\s_2}}(U).
\end{equation*}
Moreover, if $0<\t_1<\t_2$, then
\be \label{RoumieuBeurling} \E_{\{\t_1,\s\}}(U)\hookrightarrow
\E_{(\t_2,\s)}(U)\hookrightarrow \E_{\{\t_2,\s\}}(U), \;\;\; \s\geq 1,\nonumber \ee
and
$$
\varinjlim_{\t\to \infty}{\E}_{\{\t, {\s}\}}(U)= \varinjlim_{\t\to \infty} {\E}_{(\t, {\s})}(U),
$$
$$
\varprojlim_{\t\to 0^+}{\E}_{\{\t, {\s}\}}(U)= \varprojlim_{\t\to 0^+} {\E}_{(\t, {\s})}(U), \;\;\; \s\geq 1.
$$
\end{prop}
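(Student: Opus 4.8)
The plan is to reduce all four assertions to one elementary comparison of the defining weights at the level of the Banach seminorms \eqref{Norma}, and then to transport these comparisons to the inductive and projective limit topologies by functoriality. For a test function $\phi$ I would start from a bound $|\partial^\alpha\phi(x)|\le C\,h^{|\alpha|^{\s}}|\alpha|^{\t|\alpha|^{\s}}$ coming from membership in the source space, and try to dominate its right-hand side by the target weight $h'^{|\alpha|^{\s'}}|\alpha|^{\t'|\alpha|^{\s'}}$. Writing $p=|\alpha|$ and taking logarithms, this reduces to controlling the sign for large $p$ of
\be
p^{\s}\ln h-p^{\s'}\ln h'+\bigl(\t p^{\s}-\t' p^{\s'}\bigr)\ln p+\ln C,\nonumber
\ee
and everything hinges on which of the pairs $(\s,\s')$ or $(\t,\t')$ is ordered how. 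Whenever this quantity tends to $-\infty$, the quotient of weights is bounded (indeed tends to $0$), which yields a seminorm estimate $\|\phi\|_{\mathrm{target}}\le C'\|\phi\|_{\mathrm{source}}$ and hence a continuous inclusion.

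For the cross-index embedding with $\s_2>\s_1$, I would fix an arbitrary source pair $(\t,h)$ and arbitrary target parameters $\t'>0$, $h'>0$, and note that the decisive contribution
\be
\t p^{\s_1}\ln p-\t' p^{\s_2}\ln p=p^{\s_1}\ln p\,\bigl(\t-\t' p^{\s_2-\s_1}\bigr)\nonumber
\ee
tends to $-\infty$ because $\s_2-\s_1>0$; carrying an extra factor $\ln p$, it dominates the power-type remainder $p^{\s_1}\ln h-p^{\s_2}\ln h'$ for every admissible $h,h'$. Thus the weight quotient tends to $0$ (the finitely many small $p$ being absorbed into $C$), so the target seminorm is finite for all $\t',h'$. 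This says precisely that any space of index $\s_1$ embeds continuously into every Beurling-type space of strictly larger index $\s_2$, uniformly in $\t$, which is the first displayed inclusion.

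For the Roumieu--Beurling chain at fixed $\s$ with $\t_1<\t_2$, the left inclusion $\E_{\{\t_1,\s\}}\hookrightarrow\E_{(\t_2,\s)}$ follows the same pattern: from a Roumieu bound with some fixed $h_1$ the weight quotient equals $C\,(h_1/h')^{p^{\s}}\,p^{-(\t_2-\t_1)p^{\s}}$, and the super-exponential decay $p^{-(\t_2-\t_1)p^{\s}}$ beats the at most exponential factor $(h_1/h')^{p^{\s}}$ for every target $h'>0$, giving the Beurling condition. The right inclusion $\E_{(\t_2,\s)}\hookrightarrow\E_{\{\t_2,\s\}}$ is immediate, as ``for all $h$'' trivially implies ``for some $h$'' with continuous identity.

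The two limit equalities are then formal consequences of this chain. For the inductive limit as $\t\to\infty$, the inclusions $\E_{(\t,\s)}\subseteq\E_{\{\t,\s\}}$ and $\E_{\{\t_1,\s\}}\subseteq\E_{(\t_2,\s)}$ (any $\t_1<\t_2$) make the two directed families mutually cofinal, so they share the same union and the same inductive topology; for the projective limit as $\t\to0^+$ the same cofinality---now used as: given $\t$, pick $\t_1<\t$ with $\E_{\{\t_1,\s\}}\subseteq\E_{(\t,\s)}$---forces the two intersections and projective topologies to coincide. I expect the one genuinely delicate point to be the cross-index asymptotic, where one must check that the logarithmic factor $\ln p$ on the dominant term really overwhelms both weight exponents $p^{\s_1}\ln h$ and $p^{\s_2}\ln h'$ for \emph{all} admissible $h,h'$; once that is secured, the remaining inclusions and the passage to the limits are routine.
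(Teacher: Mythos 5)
Your proposal is correct: the seminorm-level comparison of the weights $h^{p^{\s}}p^{\t p^{\s}}$ (with the key observation that the $-\t' p^{\s_2}\ln p$ term dominates all exponential-type factors $p^{\s}\ln h$ when $\s_2>\s_1$, and that $p^{-(\t_2-\t_1)p^{\s}}$ beats $(h_1/h')^{p^{\s}}$ at fixed $\s$), combined with the universal properties of inductive/projective limits and the interlacing (cofinality) argument for the two limit equalities, is a complete proof. The paper itself states this proposition without proof, citing \cite{PTT-02}, and the argument given there is of exactly this standard type, so your route coincides with the intended one.
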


We conclude that
\begin{equation*}
\label{Theta_S_embedd-2}
{\E}_{\tau_0, {\s_1}}(U)\hookrightarrow \bigcap_{\tau> \tau_0} {\E}_{\tau, {\s_1}}(U)
\hookrightarrow {\E}_{\tau_0, {\s_2}}(U),
\end{equation*}
for any $\tau_0>0$ whenever $\s_2>\s_1\geq 1$. In particular (see Remark \ref{Gevrey-ext-Gevrey}),
\begin{equation*}
\label{GevreyNewclass}
\varinjlim_{t\to\infty} \E_{\{t\}}(U)\hookrightarrow {\E}_{\tau, \s}(U)
\hookrightarrow  C^{\infty}(U), \;\;\;
\tau>0, \; \s>1,
\end{equation*}
so that  the regularity in ${\E}_{\tau, \s}(U)$ can be thought of as an extended Gevrey regularity.

\par

The non-quasianalyticity condition $(M.3)'$ provides the existence of
partitions of unity in  $\E_{\{\t,\s\}}(U)$ which we formulate in the next Lemma.

\begin{lema} [\cite{PTT-01}]
\label{teoremaKompaktannosac}
Let $\t>0$ and $\s>1$. Then there exists a compactly supported function $\phi\in {\E_{\{\t,\s\}}}(U)$ such that $0\leq\phi\leq 1$ and $\int_{\Rd}\phi\,dx=1$.
\end{lema}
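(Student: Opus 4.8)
The plan is to reduce to dimension one and then construct the required bump function as an infinite convolution of rescaled characteristic functions, in the spirit of the classical Denjoy--Carleman construction, using the non-quasianalyticity condition $(M.3)'$ as the only analytic input. First I would pass to $d=1$: if $\phi_1\in\E_{\{\t,\s\}}(\mathbf{R})$ is a one-dimensional function with the desired properties, the tensor product $\phi(x)=\prod_{j=1}^d\phi_1(x_j)$ inherits them, since $|\partial^\alpha\phi(x)|=\prod_j|\phi_1^{(\alpha_j)}(x_j)|$ and the elementary inequalities $\sum_j\alpha_j^\s\le|\alpha|^\s$ and $\prod_j \alpha_j^{\t\alpha_j^\s}\le|\alpha|^{\t|\alpha|^\s}=M_{|\alpha|}^{\t,\s}$ (both valid for $\s\ge1$ by superadditivity of $t\mapsto t^\s$) convert the one-dimensional estimates into the $d$-dimensional norm \eqref{Norma} with the same $h$. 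A final dilation and translation then place the support inside $U$ and, since dilation only decreases derivatives while scaling the sup-norm, allow us to arrange simultaneously $0\le\phi\le1$ and $\int\phi\,dx=1$.

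For the one-dimensional piece I would fix a constant $c>0$, set $\ell_k=c\,M_{k-1}^{\t,\s}/M_k^{\t,\s}$ for $k\ge1$ together with a fixed length $\ell_0$, let $\psi_k=\frac{1}{2\ell_k}\chi_{[-\ell_k,\ell_k]}$ be the corresponding normalized characteristic functions, and define $\phi_1$ as the infinite convolution $\psi_0*\psi_1*\psi_2*\cdots$. Each $\psi_k$ is a probability density, so $\phi_1\ge0$ and $\int\phi_1\,dx=1$; the support is contained in $[-L,L]$ with $L=\sum_k\ell_k$, which is finite \emph{precisely} by $(M.3)'$, yielding compact support. Smoothness follows because the tail $\psi_{n+1}*\psi_{n+2}*\cdots$ is an approximate identity and the partial convolutions converge in every $C^k$ norm once the derivative bounds below are available.

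The heart of the argument is the derivative estimate. Writing $\partial^n\phi_1=\partial\psi_0*\cdots*\partial\psi_{n-1}*(\psi_n*\psi_{n+1}*\cdots)$ and using that $\partial\psi_k=\frac{1}{2\ell_k}(\delta_{-\ell_k}-\delta_{\ell_k})$ has total variation $1/\ell_k$ while the tail is bounded by $\|\psi_n\|_\infty=\frac{1}{2\ell_n}$, one obtains
\[
\|\partial^n\phi_1\|_\infty\le\Big(\prod_{k=0}^{n-1}\frac{1}{\ell_k}\Big)\frac{1}{2\ell_n}=\frac{1}{2\ell_0}\,c^{-n}\,M_n^{\t,\s},
\]
where the telescoping product $\prod_{k=1}^{n}M_k^{\t,\s}/M_{k-1}^{\t,\s}=M_n^{\t,\s}$ (recall $M_0^{\t,\s}=1$) is exactly what reproduces the defining sequence. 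Comparing with \eqref{Norma}, membership in $\E_{\{\t,\s\}}$ reduces to bounding $c^{-n}/h^{n^\s}$; since $\s>1$, the weight $h^{n^\s}$ overwhelms the geometric factor $c^{-n}$ for every $h>1$ (indeed one may already take $c\ge1$ and $h=1$), so $\phi_1\in\D^K_{\t,\s,h}$. I expect the main obstacle to be precisely this estimate together with the justification of smoothness of the infinite convolution; the pleasant feature special to the extended Gevrey setting is that the exponent $\s>1$ in $h^{|\alpha|^\s}$ makes the admissible weights so generous that the classical construction succeeds with room to spare, which is why no condition beyond $(M.3)'$ is needed.
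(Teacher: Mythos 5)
Your construction is correct and is essentially the proof the paper relies on (given in \cite{PTT-01}, cited at the lemma): the classical Denjoy--Carleman/H\"ormander infinite convolution of normalized characteristic functions whose lengths are proportional to the ratios $M_{k-1}^{\tau,\sigma}/M_{k}^{\tau,\sigma}$, so that $(M.3)'$ gives compact support, the telescoping product reproduces $M_n^{\tau,\sigma}$ in the derivative bounds, and the leftover geometric factor $c^{-n}$ (the factor $2^n$ in the source) is absorbed into $h^{n^{\sigma}}$ precisely because $\sigma>1$. The only point to strike is your closing claim that a final dilation placing the support inside a small $U$ can preserve both $0\leq\phi\leq 1$ and $\int_{\Rd}\phi\,dx=1$ simultaneously --- together these normalizations force $|\supp\phi|\geq 1$ --- but this tension is inherent in the statement of the lemma itself and does not affect your construction.
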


Of course, any compactly supported Gevrey function from $\E_{\{\t\}}(U)$ belongs to $ {\D}_{\{\t, \s\}}(U)$ as well.
However, in the proof of Lemma \ref{teoremaKompaktannosac} given in \cite{PTT-01} we
constructed a compactly supported function in ${\D}_{\{\t, \s\}}(U)$  which
does not belong to   ${\D}_{\{t\}}(U)$, for any $t>1$.

Note that the additional exponent $\s$, which appears in the power of term $h$ in \eqref{Norma},
makes the definition of $\E_{\{\t,\s\}}(U)$ different from the definition of  Carleman class, cf. \cite{HermanderKnjiga}.
This difference is essential for many calculations. For example,  defining sequences for
Carleman classes satisfy Komatsu's condition (M.2)' known as
``stability under differential operators``, while  $M_p^{\tau,\s}$ do not satisfy  (M.2)'
for $\tau>0$ and $\s>1$. However, we have the following ``stability properties``.

If $\dss P=\sum_{|\alpha|\leq m}a_{\alpha}(x)\partial^{\alpha}$ is a partial differential operator of order $m$ with $a_{\alpha}\in \E_{\t,\s}(U)$, then  $P\,:\,\E_{\t,\s}(U)\to \E_{\t,\s}(U)$ is a continuous linear map with respect to the topology of $\E_{\t,\s}(U)$.
In particular, ${\E}_{\t, \s}(U)$ is closed under pointwise
multiplications and finite order differentiation, see \cite[Theorem 2.1]{TT}.
For operators of ``infinite order`` continuity properties are slightly different, see \cite{PTT-02}.

\subsection{Extended associated functions} \label{subsectionT}

Let $M_p$, $p\in \N$, be the sequence of positive numbers such that $M^{1/p}_p$ is bounded from below and $M_0 = 1$. The \emph{associated function} to $M_p$ is defined by
$$T(k)=\sup_{p\in\N}\ln \frac{k^{p}}{M_p},\quad k>0.$$
These functions play an essential role in theory  of ultradistributions, see \cite{Komatsuultra1}.
A convenient modification of the above definition is given as follows.

\begin{de}
Let $\tau>0$, $\s>1$ and $M_p^{\tau,\s}=p^{\tau p^{\s}}$, $p\in \Z$, $M_0^{\tau,\s}=1$.
The extended associated function related to the sequence $M_p^{\t,\s}$, is given by
$$
\dss T_{\t,\s,h}(k)=\sup_{p\in \N}\ln_+\frac{h^{p^{\s}}k^{p}}{M_p^{\t,\s}}, \;\;\; h,k>0,
$$
where $\dss \ln_+ A=\max\{0, \ln A\}$, for $A>0$.
\end{de}

Note that for fixed $\t,h>0$ and $\s>1$,  $T_{\t,\s,h}(k)$ is always positive for $k$ sufficiently large.

In the following lemma we compare the usual associated functions to the sequences $p^{\t p^{\s}}$, $\t>0$, $\s>1$, and
Gevrey sequences $p^{t p}$, $t>1$ for which the associated function satisfies
$T_t (k) \asymp C k^{1/t}$, when $k\to \infty$, \cite{Rodino}.

\begin{lema}
\label{Lema:comparison}
Let $\t>0$, $\s>1$ and $M_p^{\t,\s}=p^{\t p^{\s}}$, $p\in \N$. Then for any $t>1$ there exists constant $C>0$ such that
$$
\sup_{p\in \N}\ln\frac{k^{p}}{M_p^{\t,\s}} < C  k^{1/t}, \quad k>0.
$$
\end{lema}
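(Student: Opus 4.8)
The plan is to treat the left-hand side as the (ordinary) associated function of $M_p^{\t,\s}$ and to compare it termwise with the associated function $T_t(k)=\sup_{p\in\N}\ln\frac{k^p}{p^{tp}}$ of the Gevrey sequence $p^{tp}$, for which the asymptotics $T_t(k)\asymp k^{1/t}$ (as $k\to\infty$) recalled above from \cite{Rodino} are available. The essential mechanism is that, since $\s>1$, the exponent $\t p^{\s}$ eventually dominates $tp$: setting $p_0:=\lceil (t/\t)^{1/(\s-1)}\rceil$ one has $p^{\s-1}\geq t/\t$, hence $\t p^{\s}\geq tp$, and therefore $M_p^{\t,\s}=p^{\t p^{\s}}\geq p^{tp}$, for every integer $p\geq p_0$. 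I would split the supremum accordingly into the tail $p\geq p_0$ and the finite range $0\leq p<p_0$.

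For the tail, the termwise domination $M_p^{\t,\s}\geq p^{tp}$ gives $\ln\frac{k^p}{M_p^{\t,\s}}\leq \ln\frac{k^p}{p^{tp}}$ for $p\geq p_0$, so that $\sup_{p\geq p_0}\ln\frac{k^p}{M_p^{\t,\s}}\leq T_t(k)$. The recalled asymptotics yield $T_t(k)\lesssim k^{1/t}$ for $k$ large; for $0<k\leq 1$ every term with $p\geq 1$ is negative and the $p=0$ term vanishes, so $T_t(k)=0$, while on a bounded interval $[1,k_1]$ one bounds $T_t$ by its value at $k_1$ and uses $k^{1/t}\geq 1$. Combining these three regimes gives $\sup_{p\geq p_0}\ln\frac{k^p}{M_p^{\t,\s}}\leq C_1 k^{1/t}$ for all $k>0$.

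For the finite range there are only finitely many indices, so it suffices to bound each $\ln\frac{k^p}{M_p^{\t,\s}}=p\ln k-\t p^{\s}\ln p$ with $1\leq p<p_0$. For $0<k\leq 1$ this quantity is nonpositive, and for $k>1$ it is at most $p\ln k\leq (p_0-1)\ln k$; since $\ln k\lesssim k^{1/t}$ on $[1,\infty)$ we obtain a bound $C_2 k^{1/t}$. Taking $C_0=\max\{C_1,C_2\}$ yields $\sup_{p\in\N}\ln\frac{k^p}{M_p^{\t,\s}}\leq C_0 k^{1/t}$ for all $k>0$, and the desired strict inequality follows at once by replacing $C_0$ with any $C>C_0$, since $k^{1/t}>0$ for $k>0$.

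The only genuinely delicate point is converting the asymptotic relation $T_t(k)\asymp k^{1/t}$, which is stated for $k\to\infty$, into the global bound $T_t(k)\leq C_1 k^{1/t}$ valid for all $k>0$; this is handled by the elementary case analysis for small and intermediate $k$ described above. Everything else reduces to the pointwise comparison $M_p^{\t,\s}\geq p^{tp}$ for $p\geq p_0$ and the standard estimate $\ln k\lesssim k^{1/t}$.
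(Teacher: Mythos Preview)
Your argument is correct and follows essentially the same route as the paper: compare $M_p^{\t,\s}$ termwise with the Gevrey sequence $p^{tp}$ and invoke the known asymptotics $T_t(k)\asymp k^{1/t}$. The only cosmetic difference is that the paper absorbs your finite range $1\le p<p_0$ into a single constant $H$ with $p^{tp}<H\,p^{\t p^{\s}}$ for all $p\in\N$, whereas you split off those finitely many terms explicitly; your treatment of the passage from the asymptotic relation to a global bound on $(0,\infty)$ is in fact more careful than the paper's.
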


\begin{proof}
Note the for arbitrary $t>0$ there exists constant $H>0$ such that
$$p^{t p}< H p^{\t p^{\s}},\quad \t>0,\, \s>1,\, p\in \N,$$
wherefrom
$$
\ln \frac{k^p}{p^{\t p^{\s}}}< \ln \frac{k^p}{p^{t p}}+\ln H,\quad k>0.
$$
Since
$$T_t (k)=\dss\sup_{p\in \N}\ln \frac{k^p}{p^{t p}} \asymp C k^{1/t},\quad k\to \infty,$$ for some $C>0$  the proof is immediate.
\end{proof}

By Lemma \ref{Lema:comparison} it follows that the growth of $\dss T_{\t,\s,h}(k)$, $k>0$, is slower then $k^{1/t}$ for any $t>0$. The precise result is given in the following Theorem.

For given $\s>0$, $\t,h>0$ let
$$\dss {\mathfrak R}(h,\cdot):=h^{-\frac{\s-1}{\t}}e^{\frac{\s-1}{\s}}\frac{\s-1}{\t \s}\ln k,\quad k>e. $$

\begin{te}
\label{propozicija}
Let $h>0$, $\t>0$ and $\s>1$. Then
\begin{multline}
\label{nejednakostzaTeoremu1}
\tilde{A}_{\t,\s,h}\exp\Big\{( 2^{\s-1 }\t)^{-\frac{1}{\s-1}}{\Big(\frac{\s-1}{ \s}\Big)^{\frac{\s}{\s-1}}\, {W^{-\frac{1}{\s-1}}({{\mathfrak R}(h,k)})}\,{\ln}^{\frac{\s}{\s-1}}k }\Big\}\leq
e^{T_{\t,\s,h}(k)}\\ \leq
 A_{\t,\s,h}\exp\Big\{{\Big(\frac{\s-1}{\t \s}\Big)^{\frac{1}{\s-1}}\, {W^{-\frac{1}{\s-1}}({{\mathfrak R}(h,k)})}\,{\ln}^{\frac{\s}{\s-1}}k }\Big\}, \quad k>e,
\end{multline} for some $A_{\t,\s,h}, \tilde{A}_{\t,\s,h}>0$.
\end{te}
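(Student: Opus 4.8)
The plan is to reduce the supremum defining $T_{\t,\s,h}(k)$ to a one–dimensional maximization and then to invert the resulting transcendental equation by means of the Lambert $W$ function. Writing the argument in exponential form gives $e^{T_{\t,\s,h}(k)}=\max\{1,\sup_{p\in\N}e^{g(p)}\}$, where
\[
g(x)=x\ln k+x^{\s}\ln h-\t x^{\s}\ln x,\qquad x>0 .
\]
First I would treat $x$ as continuous. For fixed large $k$ one has $g(1)=\ln k>0$ and $g(x)\to-\infty$, so $g$ has a unique interior maximizer $x^{*}$, characterized by $g'(x)=0$, i.e. by $\ln k=x^{\s-1}\bigl(\t\s\ln x+\t-\s\ln h\bigr)$.

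The technical heart is to solve this equation. Substituting $u=x^{\s-1}$ turns it into $\frac{\t\s}{\s-1}\,u\ln u+(\t-\s\ln h)\,u=\ln k$, and absorbing the linear term through the shift $w=u\,e^{b}$ with $b=\frac{\s-1}{\s}-\frac{(\s-1)\ln h}{\t}$ reduces it to the canonical form $w\ln w={\mathfrak R}(h,k)$. Since $e^{b}=e^{(\s-1)/\s}h^{-(\s-1)/\t}$, the right–hand side is exactly the quantity ${\mathfrak R}(h,k)$ of the statement, which is how that expression arises. By the defining relation \eqref{osobinaLambert} the solution is $w=e^{W({\mathfrak R})}={\mathfrak R}/W({\mathfrak R})$, so that $u=\frac{(\s-1)\ln k}{\t\s\,W({\mathfrak R})}$ (the factors $e^{\pm b}$, hence the explicit $h$–dependence, cancel here) and $x^{*}=u^{1/(\s-1)}$.

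Next I would substitute back. Eliminating $\ln k$ by the first–order relation collapses the maximal value to $g(x^{*})=\t\,(x^{*})^{\s}\bigl(W({\mathfrak R})+\tfrac1\s\bigr)$, and inserting $(x^{*})^{\s}=u^{\s/(\s-1)}$ yields
\[
g(x^{*})=\t^{-\frac{1}{\s-1}}\Bigl(\tfrac{\s-1}{\s}\Bigr)^{\frac{\s}{\s-1}}
\bigl(W({\mathfrak R})+\tfrac1\s\bigr)\,W({\mathfrak R})^{-\frac{\s}{\s-1}}\,(\ln k)^{\frac{\s}{\s-1}} ,
\]
the bracket being $W^{-1/(\s-1)}\bigl(1+\tfrac{1}{\s W}\bigr)$. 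For the upper bound I would use $T_{\t,\s,h}(k)\le g(x^{*})$ together with $\frac{\s-1}{\s}<1$, which gives $\bigl(\tfrac{\s-1}{\s}\bigr)^{\s/(\s-1)}\bigl(1+\tfrac1{\s W}\bigr)\le\bigl(\tfrac{\s-1}{\s}\bigr)^{1/(\s-1)}$ once $k$ is large, producing the stated coefficient $\bigl(\frac{\s-1}{\t\s}\bigr)^{1/(\s-1)}$; the bounded range of $k$ and the lower–order term are absorbed into $A_{\t,\s,h}$. For the lower bound I would evaluate $g$ at an integer $p_{0}$ adjacent to $x^{*}$ and estimate the rounding and subprincipal contributions by elementary convexity inequalities — the crude splitting of $p_{0}^{\s}$ is what replaces $\t$ by $2^{\s-1}\t$ — and then invoke the explicit bounds \eqref{sharpestimateLambert} on $W$ to reach the closed form with constant $\tilde A_{\t,\s,h}$.

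I expect the main obstacle to be the inversion step and the accompanying bookkeeping: casting the critical–point equation into exactly the $z e^{z}$ form with the correct argument ${\mathfrak R}(h,k)$, including the precise $e^{(\s-1)/\s}$ and $h^{-(\s-1)/\t}$ factors, and then verifying that all $h$–dependence cancels in the leading term so that only the clean power $W^{-1/(\s-1)}(\ln k)^{\s/(\s-1)}$ survives. By comparison, controlling the discrete–versus–continuous gap, the sign condition hidden in $\ln_{+}$, and the behaviour for moderate $k$ is routine once the multiplicative constants $A_{\t,\s,h}$ and $\tilde A_{\t,\s,h}$ are available.
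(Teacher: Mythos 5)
Your reduction to a continuous maximization, the Lambert-$W$ inversion, and the upper bound are sound and essentially the paper's own argument: your substitution $u=x^{\s-1}$, $w=ue^{b}$ is an equivalent repackaging of the paper's step of rewriting $f'(r)=0$ as \eqref{jednacina2}; your value $g(x^{*})=\t\,(x^{*})^{\s}\bigl(W({\mathfrak R})+\tfrac1\s\bigr)$ agrees with \eqref{racunGornjaOcena}; and your largeness condition for the upper bound is exactly \eqref{poslednjaNejednakost1}, namely $W({\mathfrak R})\geq\frac{\s-1}{\s}$, valid for $k\geq e^{\t h^{(\s-1)/\t}}$.

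The lower bound, however, contains a genuine gap: the mechanism you name --- the convexity splitting $p_{0}^{\s}\leq 2^{\s-1}\bigl((x^{*})^{\s}+1\bigr)$ as the source of the factor $2^{\s-1}$ --- does not work. Applying it to the negative term $-\t p_{0}^{\s}\ln p_{0}$ replaces that term by $-2^{\s-1}\t\,(x^{*})^{\s}\ln x^{*}$ up to lower order, i.e.\ introduces an error of size $(2^{\s-1}-1)\,\t\,(x^{*})^{\s}\ln x^{*}$. Since $(x^{*})^{\s-1}=u=\frac{(\s-1)\ln k}{\t\s\,W({\mathfrak R})}$, one has $\ln x^{*}\sim\frac{1}{\s-1}W({\mathfrak R})$ as $k\to\infty$, so this error is asymptotically $\frac{2^{\s-1}-1}{\s-1}\,\t\,(x^{*})^{\s}W({\mathfrak R})$, which is of the \emph{same order} as the quantity $g(x^{*})=\t\,(x^{*})^{\s}\bigl(W({\mathfrak R})+\tfrac1\s\bigr)$ you are trying to keep: the ratio tends to $\frac{2^{\s-1}-1}{\s-1}$, which is larger than $\tfrac12$ for every $\s>1$ and is $\geq 1$ for $\s\geq2$. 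Hence for $\s\geq2$ your lower bound degenerates to something nonpositive, and even for $1<\s<2$ it falls short of the stated coefficient $(2^{\s-1}\t)^{-\frac{1}{\s-1}}=\tfrac12\,\t^{-\frac{1}{\s-1}}$. The missing idea is to exploit $g'(x^{*})=0$: the paper evaluates $f$ at $r_{1}=\lceil r_{0}\rceil$ via Taylor's formula centered at $r_{0}$ (see \eqref{Taylor}), so the rounding error enters only through second and higher derivatives, which are $O\bigl((x^{*})^{\s-2}\ln x^{*}\bigr)$ plus a bounded remainder --- of strictly lower order than $g(x^{*})$ --- and are shown in \eqref{racunTaylor}--\eqref{racunTaylor2} to be at most half of the main term plus a constant once $k$ is large. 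The factor $2^{\s-1}$ in the statement is not a convexity constant at all; it is just the identity $(2^{\s-1}\t)^{-\frac{1}{\s-1}}=\tfrac12\,\t^{-\frac{1}{\s-1}}$, i.e.\ the half of the leading coefficient that survives after subtracting the Taylor correction. (The inequality $|\a+\b|^{\s}\leq 2^{\s-1}(|\a|^{\s}+|\b|^{\s})$ does occur in the paper, but in the proof of Theorem \ref{NezavisnostWFThm}, not here; likewise \eqref{sharpestimateLambert} is not needed in the proof of Theorem \ref{propozicija}.)
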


The proof is given in Section \ref{Sec:Proof}.

Notice that the right-hand side of \eqref{nejednakostzaTeoremu1} states that for any given $h>0$ there exists $A_h>0$ such that

\be
\label{ekvivalentnaNejedn}
\sup_{k>0} k^p \exp\Big\{-{\Big(\frac{\s-1}{\t \s}\Big)^{\frac{1}{\s-1}}\, {W^{-\frac{1}{\s-1}}({\mathfrak R}(h,k))}\,{\ln}^{\frac{\s}{\s-1}}k }\Big\}\leq A_h (1/h)^{p^{\s}} p^{\t p^{\s}},
\ee for $p\in \N$.

We finish this section with some useful remarks.

\begin{rem}
When $\t>0\,,h>0$ and $1<\s<2$, the first inequality in \eqref{nejednakostzaTeoremu1} can be improved. In particular, since
the second term in \eqref{Taylor} is equal to zero when $1<\s<2$, it can be shown that
\begin{multline*}
\tilde{A}_{\t,\s,h}\exp\Big\{{\Big(\frac{\s-1}{\t \s}\Big)^{\frac{1}{\s-1}}\, {W^{-\frac{1}{\s-1}}({{\mathfrak R}(h,k)})}\,{\ln}^{\frac{\s}{\s-1}}k }\Big\}\leq
e^{T_{\t,\s,h}(k)}\\ \leq
 A_{\t,\s,h}\exp\Big\{{\Big(\frac{\s-1}{\t \s}\Big)^{\frac{1}{\s-1}}\, {W^{-\frac{1}{\s-1}}({{\mathfrak R}(h,k)})}\,{\ln}^{\frac{\s}{\s-1}}k }\Big\}, \quad k>e,
\end{multline*}
with suitable $A_{\t,\s,h}> \tilde{A}_{\t,\s,h}>0$.
\end{rem}

\begin{rem}
In the view of \eqref{sharpestimateLambert}, for every  $h>0$ we have
\begin{multline}
\label{asimptotskaocena}
 {W^{-\frac{1}{\s-1}}({\mathfrak R}(h,k))}\,{\ln}^{\frac{\s}{\s-1}}k
\asymp
\Big(  \frac{\ln k}{\ln (C_h \ln k)}\Big)^{\frac{1}{\s-1}}\ln k,\quad k\to \infty,
\end{multline} where $\dss C_h:=h^{-\frac{\s-1}{\t}}e^{\frac{\s-1}{\s}}\frac{\s-1}{\t \s}$.

Since $\lim_{k\to\infty} ( \ln k)^{1/(\s-1)} (\ln (C_h \ln k) )^{-1/(\s-1)} =\infty,$
for every $h>0$,
\eqref{asimptotskaocena} implies that for every $M>0$ there exists $B>0$ (depending on $h$ and $M$) such that
\be
\label{ocenaIntegral}
{W^{-\frac{1}{\s-1}}({\mathfrak R}(h,k))}\,{\ln}^{\frac{\s}{\s-1}}k> M\ln k,\quad k> B.
\ee
\end{rem}

\section{Paley-Wiener theorems}\label{PaleyWiener}

Next  we prove the Paley-Wiener theorem for the spaces $\D_{(\t,\s)}(U)$ and $\D_{\{\t,\s\}}(U)$.
We use some ideas presented in \cite{KomatsuNotes}.

\begin{te}
\label{TeoremaPaley}
Let $\t>0$, $\s>1$, $U$ be open set in $\Rd$ and $K\subset\subset U$. If $\varphi\in \D_{\t,\s,h}^{K}$ for some $h>0$,
then its Fourier-Laplace transform is an entire function and satisfies
\begin{multline}
\label{ocenaTeorema0}
|\widehat\varphi(\eta)|\leq \\ A_{\t,\s,h} \exp\Big\{-(\t 2^{\s-1 })^{-\frac{1}{\s-1}}\Big(\frac{\s-1}{\s}\Big)^{\frac{\s}{\s-1}}{W^{-\frac{1}{\s-1}}\Big({\mathfrak R}\Big(\frac{1}{2e h\sqrt{d}},e+|\eta|\Big)\Big)}{{\ln}}^{\frac{\s}{\s-1}}(e+|\eta|)+ H_K(\eta) \Big\}\\ \eta \in {\mathbf C}^d,
\end{multline}
for some $A_{\t,\s,h}>0$, where $\dss H_K(\eta)=\sup_{y\in K} {\rm Im}(y\cdot\eta)$.

Conversely, if an entire function $\dss{F}$ satisfies
\begin{multline}
\label{ocenaTeorema}
|F(\eta)|\leq A_{\t,\s,h} \exp\Big\{-\Big(\frac{\s-1}{\t \s}\Big)^{\frac{1}{\s-1}}{W^{-\frac{1}{\s-1}}\Big({\mathfrak R}\Big(\frac{2^{\t}}{h},e+|\eta|\Big)\Big)}{{\ln}}^{\frac{\s}{\s-1}}(e+|\eta|)+ H_K(\eta) \Big\},\\ \eta \in {\mathbf C}^d,
\end{multline}
for some $h>0$ and $A_{\t,\s,h}>0$, then  $F$ is the Fourier-Laplace transform of some  $\varphi\in \D_{2^{\s-1}\t,\s,h}^{K}$.
\end{te}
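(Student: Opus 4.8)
The overall strategy is to reduce each implication to the sequence $M_p^{\t,\s}$ together with its extended associated function $T_{\t,\s,h}$, and only at the end to invoke Theorem~\ref{propozicija} to pass to the Lambert $W$ function. Since $\varphi\in\D_{\t,\s,h}^K$ is compactly supported and smooth, differentiation under the integral sign shows at once that $\widehat\varphi(\eta)=\int_K\varphi(x)e^{ix\cdot\eta}\,dx$ is entire on $\mathbf{C}^d$. To extract the decay I would integrate by parts: for every multi-index $\alpha$ one has $\eta^\alpha\widehat\varphi(\eta)=i^{|\alpha|}\int_K\partial^\alpha\varphi(x)e^{ix\cdot\eta}\,dx$, so that \eqref{Norma} and $|e^{ix\cdot\eta}|\le e^{H_K(\eta)}$ for $x\in K$ give
$$|\eta^\alpha\widehat\varphi(\eta)|\le C\,e^{H_K(\eta)}\,h^{|\alpha|^\s}M_{|\alpha|}^{\t,\s},\qquad C=|K|\,\|\varphi\|_{\E_{\t,\s,h}(K)}.$$
Choosing $\alpha$ concentrated in the coordinate direction that maximises $|\eta_j|$ yields $|\eta|^{p}|\widehat\varphi(\eta)|\le C\,(\sqrt d)^{p}e^{H_K(\eta)}h^{p^\s}M_p^{\t,\s}$ for every $p\in\N$, and taking the infimum over $p$ gives
$$|\widehat\varphi(\eta)|\le C\,e^{H_K(\eta)}\inf_{p\in\N}\frac{(\sqrt d)^{p}h^{p^\s}M_p^{\t,\s}}{|\eta|^{p}}=C\,e^{H_K(\eta)}\exp\bigl(-T_{\t,\s,1/h}(|\eta|/\sqrt d)\bigr).$$
Now I would feed the left (lower) inequality of \eqref{nejednakostzaTeoremu1}, read with $h$ replaced by $1/h$, into the factor $e^{-T_{\t,\s,1/h}}$; the elementary rescaling replacing the argument $|\eta|/\sqrt d$ by $e+|\eta|$ (so that the argument exceeds $e$ and $\ln$ is controlled) and absorbing $C$ into the constant lands precisely on \eqref{ocenaTeorema0}, the coefficient $(\t 2^{\s-1})^{-1/(\s-1)}(\frac{\s-1}{\s})^{\s/(\s-1)}$ being exactly the one furnished by that inequality.

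\textbf{Converse part.} Given an entire $F$ satisfying \eqref{ocenaTeorema}, I would define the candidate by inversion along the real axis, $\varphi(x)=(2\pi)^{-d}\int_{\Rd}F(\xi)e^{-ix\cdot\xi}\,d\xi$. For real $\xi$ we have $H_K(\xi)=0$, so \eqref{ocenaTeorema} combined with the growth of the $W$-term recorded in \eqref{ocenaIntegral} shows that $F$ decays faster than any negative power of $\langle\xi\rangle$; hence the integral and each of its formal derivatives converge absolutely, $\varphi\in C^\infty$, and $\partial^\alpha\varphi(x)=(2\pi)^{-d}\int F(\xi)(-i\xi)^\alpha e^{-ix\cdot\xi}\,d\xi$. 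To bound these I would split off $d+1$ extra powers for integrability, estimating $|\xi^\alpha F(\xi)|\le |\xi|^{|\alpha|+d+1}|F(\xi)|\,\langle\xi\rangle^{-(d+1)}$, and apply the reformulation \eqref{ekvivalentnaNejedn} of the upper bound in Theorem~\ref{propozicija} with $p=|\alpha|+d+1$, which controls $|\xi|^{|\alpha|+d+1}|F(\xi)|$ by a constant times $h^{-(|\alpha|+d+1)^\s}M_{|\alpha|+d+1}^{\t,\s}$, while $\langle\xi\rangle^{-(d+1)}$ is integrable. Finally I would invoke $\overline{(M.2)}$ from Lemma~\ref{osobineM_p_s}, $M_{|\alpha|+d+1}^{\t,\s}\le C^{|\alpha|^\s+(d+1)^\s}M_{|\alpha|}^{2^{\s-1}\t,\s}M_{d+1}^{2^{\s-1}\t,\s}$, to peel off the surplus index; this is exactly what produces the dilation $\t\mapsto 2^{\s-1}\t$ in the conclusion and, together with the factor $2^{\t}$ in the argument of $\mathfrak R$ in \eqref{ocenaTeorema}, lets the powers of $h$ recombine into $|\partial^\alpha\varphi(x)|\le C'h^{|\alpha|^\s}M_{|\alpha|}^{2^{\s-1}\t,\s}$, i.e. $\varphi\in\E_{2^{\s-1}\t,\s,h}$.

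\textbf{Support, inversion, and the main obstacle.} To see that $\supp\varphi\subseteq K$ I would use that $F$ is entire: replacing the real contour in the inversion integral by $\xi\mapsto\xi+i\zeta$ and applying Cauchy's theorem (legitimate by the decay just established), the growth bound \eqref{ocenaTeorema} yields control of the form $|\varphi(x)|\le C_N e^{-x\cdot\zeta+H_K(i\zeta)}|\zeta|^{-N}$; for $x$ outside the convex hull of $K$ one selects $\zeta$ in a separating direction and lets $|\zeta|\to\infty$ to force $\varphi(x)=0$. Fourier inversion then gives $\widehat\varphi=F$ on $\Rd$, and since both sides are entire the identity theorem propagates the equality to all of $\mathbf{C}^d$, so $F$ is the Fourier--Laplace transform of $\varphi\in\D_{2^{\s-1}\t,\s,h}^K$. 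The genuinely delicate step is the parameter bookkeeping in the converse: the \emph{extended}-Gevrey weight $h^{|\alpha|^\s}$ (rather than the classical $h^{|\alpha|}$) forces the use of \eqref{ekvivalentnaNejedn} instead of a plain associated-function estimate, and because the stability condition (M.2)$'$ fails for $M_p^{\t,\s}$ the extra $d+1$ derivatives cannot be absorbed for free but only at the cost of the $2^{\s-1}$-dilation supplied by $\overline{(M.2)}$. Matching this to the constants $1/(2eh\sqrt d)$ and $2^{\t}/h$ appearing in the two statements is where the real work lies; the contour-shift support argument, by contrast, is classical and only mildly affected by the new weights.
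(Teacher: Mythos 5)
Your proposal is correct in substance and, in the direct part and the support argument, it is essentially the paper's own proof: integration by parts giving $|\eta|^{p}|\widehat\varphi(\eta)|\lesssim e^{H_K(\eta)}h^{p^{\s}}M_p^{\t,\s}$, recognition of the resulting infimum as $e^{-T}$, an appeal to the lower bound in \eqref{nejednakostzaTeoremu1}, and a contour shift plus separating-hyperplane argument for $\supp\varphi$ (the paper builds the passage from $|\eta|/\sqrt d$ to $e+|\eta|$ in from the start, via $(e+|\eta|)^{|\alpha|}\le (2e)^{|\alpha|}|\eta|^{|\alpha|}$ and absorption of $(2e\sqrt d)^{|\alpha|}$ into the weight, which is exactly your ``elementary rescaling''). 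The genuine difference is in the converse. The paper never invokes $\overline{(M.2)}$: it splits the exponential weight in \eqref{ocenaTeorema} into two equal factors using the identity $\big(\frac{\s-1}{\t\s}\big)^{\frac{1}{\s-1}}=2\big(\frac{\s-1}{2^{\s-1}\t\s}\big)^{\frac{1}{\s-1}}$; one factor is paired with $(e+|\xi|)^{|\alpha|}$ and estimated through \eqref{ekvivalentnaNejedn} read at the dilated parameter $2^{\s-1}\t$, the other yields integrability via \eqref{ocenaIntegral} with $M=d+1$. Thus in the paper the dilation $\t\mapsto 2^{\s-1}\t$ comes from halving the exponent, whereas in your proof it comes from $\overline{(M.2)}$ after borrowing $d+1$ powers of $(e+|\xi|)$ for integrability. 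Both mechanisms are legitimate; yours applies \eqref{ekvivalentnaNejedn} at the original $\t$ with weight $2^{\t}/h$, i.e.\ exactly in the functional form in which \eqref{ocenaTeorema} is written (the paper's route needs the unstated reparametrization of $\mathfrak{R}(2^{\t}/h,\cdot)$ as an $\mathfrak{R}$ attached to $2^{\s-1}\t$), while the paper's route avoids the index shift and hence any appeal to Lemma \ref{osobineM_p_s}.

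Two caveats. First, the inequality $|\xi^{\alpha}F(\xi)|\le|\xi|^{|\alpha|+d+1}|F(\xi)|\langle\xi\rangle^{-(d+1)}$ is backwards, since $|\xi|^{d+1}\le\langle\xi\rangle^{d+1}$; what your argument actually uses is $|\xi|^{|\alpha|}\le(e+|\xi|)^{|\alpha|+d+1}(e+|\xi|)^{-(d+1)}$, a trivial repair. Second, the final recombination is looser than you assert: from \eqref{ekvivalentnaNejedn} and $\overline{(M.2)}$ you obtain, up to constants, $(h/2^{\t})^{(|\alpha|+d+1)^{\s}}C^{|\alpha|^{\s}+(d+1)^{\s}}M^{2^{\s-1}\t,\s}_{|\alpha|}$, and converting the factor $(Ch/2^{\t})^{|\alpha|^{\s}}$ into $h^{|\alpha|^{\s}}$ requires $C\le 2^{\t}$ (and $h\le 2^{\t}$ for the superadditivity step $(|\alpha|+d+1)^{\s}\ge|\alpha|^{\s}+(d+1)^{\s}$ to point the right way); in general you only reach $\varphi\in\D^{K}_{2^{\s-1}\t,\s,h'}$ for a rescaled $h'$. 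This is not a defect relative to the paper --- its own application of \eqref{ekvivalentnaNejedn} in \eqref{supremumOcena} implicitly rescales $h$ in the same way (a direct matching of parameters there produces $h^{2^{\s-1}|\alpha|^{\s}}$ rather than $h^{|\alpha|^{\s}}$), and at the level of the Roumieu/Beurling classes in Corollary \ref{Cor:PaleyWiener} such rescalings are immaterial --- but if you want the theorem verbatim, with the same $h$, then the bookkeeping you yourself flag as ``where the real work lies'' still has to be carried out, and as written it does not close.
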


\begin{proof}
Let $K\subset\subset U$, $\t>0$ and $\s>1$ be fixed and let $\varphi\in \D_{\t,\s,h}^{K}$ for some $h>0$. Throughout the proof we will use the following simple inequalities:
$$\dss |\eta|^{|\alpha|}\leq (e+|\eta|)^{|\alpha|}\leq (2e)^{|\alpha|} |\eta|^{|\alpha|},$$
$$(1/\sqrt{d})|\eta|^{|\alpha|}\leq |\eta^{\alpha}|\leq |\eta|^{|\alpha|},\quad \alpha\in \N^d, \eta\in{\mathbf C}^d.$$
This gives
$$|(e+|\eta|)^{|\alpha|}\widehat\varphi(\eta)|\leq (2e{\sqrt{d}})^{|\alpha|} |\widehat{D^{\alpha}\varphi}(\eta)|\leq \sup_{x\in K}|D^{\alpha}\varphi(x)|\cdot\int_{K}e^{{\rm Im}(x\cdot\eta)} dx,$$
wherefrom
\begin{multline}
\label{NejednakostZaWF}
|\widehat\varphi(\eta)|
\leq \inf_{\alpha\in \N^d}\Big( \frac{(2eh\sqrt{d})^{|\alpha|^{\s}}|\alpha|^{\t|\alpha|^{\s}}}{(e+|\eta|)^{|\alpha|}}\Big) e^{H_K(\eta)},
\alpha\in \N^d,\,\, \eta\in {\mathbf C}^d.
\end{multline}
Now \eqref{ocenaTeorema0} follows directly from the left-hand side of \eqref{nejednakostzaTeoremu1}, since
$$ \inf_{\alpha\in \N^d} \frac{(2eh\sqrt{d})^{|\alpha|^{\s}}|\alpha|^{\t|\alpha|^{\s}}}{(e+|\eta|^{|\alpha|})}=\Big(\sup_{\alpha\in \N^d} \frac{(e+|\eta|)^{|\alpha|}}{(2eh\sqrt{d})^{|\alpha|^{\s}}|\alpha|^{\t|\alpha|^{\s}}}\Big)^{-1}.$$

To prove the second part of the theorem, set $\eta=\xi +i \mu$, for $\xi, \mu \in \Rd$ and note that $\dss H_K(\eta)=\sup_{y\in K} {\rm Im}(y\cdot\eta)=\sup_{y\in K} y\cdot \mu$.
Let
\be
\label{definicijaFunkcijeFi}
\dss \varphi (x):=(2\pi)^{-d}\int_{\Rd}F (\xi) e^{i x\xi}d\xi, \quad x\in \Rd,
\ee and note that for some $h>0$ \eqref{ocenaTeorema} implies
\begin{multline}
\label{ocenePaley}
|D^{\alpha} \varphi(x)|=\Big| \int_{\Rd} \xi^{\alpha} F (\xi)e^{i x\xi}d\xi \Big|\\
\leq \int_{\xi\in \Rd}|\xi^{\alpha}|\exp\Big\{-\Big(\frac{\s-1}{\t \s}\Big)^{\frac{1}{\s-1}}{W^{-\frac{1}{\s-1}}\Big({\mathfrak R}\Big(\frac{2^{\t}}{h},e+|\xi|\Big)\Big)}{{\ln}}^{\frac{\s}{\s-1}}(e+|\xi|)\Big\}d\xi\\
\leq \sup_{\xi\in \Rd}(e+|\xi|)^{|\alpha|}\exp\Big\{-\Big(\frac{\s-1}{2^{\s-1}\t \s}\Big)^{\frac{1}{\s-1}}{W^{-\frac{1}{\s-1}}\Big({\mathfrak R}\Big(\frac{2^{\t}}{h},e+|\xi|\Big)\Big)}{{\ln}}^{\frac{\s}{\s-1}}(e+|\xi|) \Big\}\\
\times \int_{\Rd} \exp\Big\{-\Big(\frac{\s-1}{2^{\s-1}\t \s}\Big)^{\frac{1}{\s-1}}{W^{-\frac{1}{\s-1}}\Big({\mathfrak R}\Big(\frac{2^{\t}}{h},e+|\xi|\Big)\Big)}{{\ln}}^{\frac{\s}{\s-1}}(e+|\xi|) \Big\}d\xi, \quad x\in \Rd.
\end{multline}
By choosing $M=d+1$ in \eqref{ocenaIntegral} we conclude that there is a constant $B>0$ such that
$$ \exp\Big\{-\Big(\frac{\s-1}{2^{\s-1}\t \s}\Big)^{\frac{1}{\s-1}}{W^{-\frac{1}{\s-1}}\Big({\mathfrak R}\Big(\frac{2^{\t}}{h},\langle \xi\rangle\Big)\Big)}{{\ln}}^{\frac{\s}{\s-1}}\langle\xi\rangle \Big\}\leq e^{-(d+1)\ln \langle\xi\rangle}=\langle\xi\rangle^{-d-1},$$
when $\quad |\xi|>B$, wherefrom
\be
\label{integralOcena}
\int_{\Rd} \exp\Big\{-\Big(\frac{\s-1}{2^{\s-1}\t \s}\Big)^{\frac{1}{\s-1}}{W^{-\frac{1}{\s-1}}\Big({\mathfrak R}\Big(\frac{2^{\t}}{h},e+|\xi|\Big)\Big)}{{\ln}}^{\frac{\s}{\s-1}} (e+|\xi|) \Big\}d\xi\leq C_h,
\ee
for some $C_h>0$. Moreover, since
$$\dss {\mathfrak R}\Big(\frac{2^{\t}}{h},e+|\xi|\Big)=h^{\frac{\s-1}{\t}}e^{\frac{\s-1}{\s}}\frac{\s-1}{2^{\s-1}\t \s}\ln (e+|\xi|), \quad h>0,\,\xi\in\Rd,$$
the inequality \eqref{ekvivalentnaNejedn} implies that for a given $h>0$ there exists $A_h>0$ such that

\begin{multline}
\label{supremumOcena}
 \sup_{\xi\in \Rd}|\xi|^{|\alpha|}\exp\Big\{-\Big(\frac{\s-1}{2^{\s-1}\t \s}\Big)^{\frac{1}{\s-1}}{W^{-\frac{1}{\s-1}}\Big({\mathfrak R}\Big(\frac{2^{\t}}{h},e+|\xi|\Big)\Big)}{{\ln}}^{\frac{\s}{\s-1}}(e+|\xi|) \Big\}\\
\leq A_h h^{|\alpha|^{\s}} |\alpha|^{2^{\s-1}\t |\alpha|^{\s}}, \quad \alpha\in \N^d.
\end{multline}

Now \eqref{ocenePaley}, \eqref{integralOcena} and \eqref{supremumOcena} imply
$$
\sup_{x\in \Rd} |D^{\alpha} \varphi(x)|\leq \widetilde{A_h} h^{|\alpha|^{\s}} |\alpha|^{2^{\s-1}\t |\alpha|^{\s}}, \quad\alpha\in \N^d,
$$
for some $\widetilde{A_h}>0$.

It remains to show that ${\rm supp}\varphi \subseteq K$. Arguing as in the proof of \cite[Theorem 7.3.1]{HermanderKnjiga} we may shift the integration in \eqref{definicijaFunkcijeFi} so that

\be
\label{ProsirenoPhi1}
\dss \varphi (x)=(2\pi)^{-d}\int_{\Rd}F (\xi+i\mu) e^{i x (\xi+i \mu)}d\xi, \quad x\in \Rd,
\ee for any $\mu\in \Rd$. This is possible since \eqref{ocenaTeorema} and \eqref{ocenaIntegral} implies
\be
\label{ProsirenoPhi2}
 |F(\eta)|\leq C  e^{H_K(\eta)} \langle \eta \rangle^{-d-1}\leq C e^{\sup_{y\in K} (y\cdot \mu)} \langle \xi \rangle^{-d-1},\quad \eta=\xi+i \mu,
\ee for some $C>0$, where the last inequality follows from  $|\eta|^2=|\xi|^{2}+|\mu|^2>|\xi|^{2}$.
Now by \eqref{ProsirenoPhi1} and \eqref{ProsirenoPhi2} we have
\be
\label{ProsirenoPhi3}
|\varphi(x)|\leq C  \exp\{{{\sup_{y\in K} (y\cdot \mu)}-x\cdot \mu}\} \int_{\Rd}\langle \xi \rangle^{-d-1}d\xi
\ee
$$
=\tilde{C} \exp\{{{\sup_{y\in K} (y\cdot \mu)}-x\cdot \mu}\}, \;\;\; x, \mu \in \Rd
$$
for a suitable constant $\tilde{C}>0$.

If $x\not\in K$, then \cite[Theorem 4.3.2]{HermanderKnjiga} implies that there exists $\mu_0\in \Rd$ such that $\dss x\cdot \mu_0>\sup_{y\in K} (y\cdot \mu_0)$ and choosing $\mu=t \mu_0$, $t>0$ in \eqref{ProsirenoPhi3}, we have
$$
|\varphi(x)|\leq \tilde{C} \exp\Big\{t\Big({{\sup_{y\in K} (y\cdot \mu_0)}-x\cdot \mu_0}\Big)\Big\}\to 0, \quad t\to \infty.
$$
Thus, ${\rm supp}\varphi \subseteq K$, and the proof is finished.
\end{proof}

\begin{cor} \label{Cor:PaleyWiener}
Let $\t>0$, $\s>1$, $U$ be open set in $\Rd$ and $K\subset\subset U$. If $\varphi\in \D_{\{\t,\s\}}(U)$
($\varphi\in \D_{(\t,\s)}(U)$ respectively)
then its Fourier-Laplace transform is an entire function and satisfies
\begin{multline}
\label{ocenaTeorema0}
|\widehat\varphi(\eta)|\leq \\ A_{\t,\s,h} \exp\Big\{-(\t 2^{\s-1 })^{-\frac{1}{\s-1}}\Big(\frac{\s-1}{\s}\Big)^{\frac{\s}{\s-1}}{W^{-\frac{1}{\s-1}}\Big({\mathfrak R}\Big(\frac{1}{2e h\sqrt{d}},e+|\eta|\Big)\Big)}{{\ln}}^{\frac{\s}{\s-1}}(e+|\eta|)+ H_K(\eta) \Big\}\\ \eta \in {\mathbf C}^d,
\end{multline}
for some $h>0$, (for every $h>0$, respectively) and for some constant $A_{\t,\s,h}>0$,
where $\dss H_K(\eta)=\sup_{y\in K} {\rm Im}(y\cdot\eta)$.

Conversely, if an entire function $\dss{F}$ satisfies
\begin{multline}
\label{ocenaTeorema}
|F(\eta)|\leq A_{\t,\s,h} \exp\Big\{-\Big(\frac{\s-1}{\t \s}\Big)^{\frac{1}{\s-1}}{W^{-\frac{1}{\s-1}}\Big({\mathfrak R}\Big(\frac{2^{\t}}{h},e+|\eta|\Big)\Big)}{{\ln}}^{\frac{\s}{\s-1}}(e+|\eta|)+ H_K(\eta) \Big\},\\ \eta \in {\mathbf C}^d,
\end{multline}
for some $h>0$ for every $h>0$, respectively) and for some constant $A_{\t,\s,h}>0$, then  $F$ is the Fourier-Laplace transform of  some $\varphi\in \D_{\{\t,\s\}}(U)$ ($\varphi\in \D_{(\t,\s)}(U)$ respectively).
\end{cor}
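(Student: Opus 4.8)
The plan is to derive the Corollary from Theorem \ref{TeoremaPaley} by unwinding the inductive and projective limits defining the two test function spaces. Recall that
$$\D_{\{\t,\s\}}(U)=\varinjlim_{K\subset\subset U}\varinjlim_{h\to\infty}\D^K_{\t,\s,h},\qquad \D_{(\t,\s)}(U)=\varinjlim_{K\subset\subset U}\varprojlim_{h\to 0}\D^K_{\t,\s,h},$$
and that the norm \eqref{Norma} decreases in $h$, so that the Banach spaces $\D^K_{\t,\s,h}$ increase with $h$. Hence $\varphi\in\D_{\{\t,\s\}}(U)$ precisely when $\varphi\in\D^K_{\t,\s,h}$ for some regular compact $K\subset\subset U$ and some $h>0$, while $\varphi\in\D_{(\t,\s)}(U)$ precisely when there is a single such $K$ with $\varphi\in\D^K_{\t,\s,h}$ for every $h>0$. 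This matches exactly the quantifier pattern (``for some $h$'' versus ``for every $h$'') in the statement, so the whole proof reduces to transporting the Banach-space conclusions of Theorem \ref{TeoremaPaley} through these limits.

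For the direct implications I would proceed as follows. In the Roumieu case I fix a pair $(K,h)$ witnessing $\varphi\in\D^K_{\t,\s,h}$ and apply the direct part of Theorem \ref{TeoremaPaley}; this gives that $\widehat\varphi$ is entire and satisfies \eqref{ocenaTeorema0} for that single value of $h$, i.e.\ for some $h>0$. In the Beurling case $\varphi\in\D^K_{\t,\s,h}$ holds for every $h>0$, so one application of the theorem per $h$ yields \eqref{ocenaTeorema0} for every $h>0$. No new estimate is required; one only reads the conclusion off along the limit.

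For the converse implications I would apply the converse part of Theorem \ref{TeoremaPaley}: an entire function $F$ satisfying \eqref{ocenaTeorema} for a given $h$ is the Fourier--Laplace transform of some $\varphi$ with $\supp\varphi\subseteq K$ belonging to $\D^K_{2^{\s-1}\t,\s,h}$. In the Roumieu case this places $\varphi$ in $\varinjlim_{h\to\infty}\D^K_{2^{\s-1}\t,\s,h}$, and in the Beurling case, where \eqref{ocenaTeorema} holds for all $h>0$, it places $\varphi$ in $\varprojlim_{h\to 0}\D^K_{2^{\s-1}\t,\s,h}$.

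The point requiring care --- and what I expect to be the main obstacle --- is the index $2^{\s-1}\t$ produced by the converse in place of the declared index $\t$. This shift is intrinsic to these classes and comes from the factor $2^{\s-1}$ in condition $\overline{(M.2)}$ of Lemma \ref{osobineM_p_s}; it is a genuine loss at the level of the leading coefficient $(\frac{\s-1}{\t\s})^{1/(\s-1)}$ of the exponent and cannot be removed by varying $h$ alone, since by \eqref{asimptotskaocena} the $h$-dependence enters only through $C_h$ inside the iterated logarithm and hence perturbs only lower-order terms. Reconciling the two indices must therefore be done at the level of the limit spaces: using the Roumieu--Beurling comparison $\E_{\{\t_1,\s\}}\hookrightarrow\E_{(\t_2,\s)}\hookrightarrow\E_{\{\t_2,\s\}}$ of Proposition \ref{detectposition} to move between the two scales with an arbitrarily small change of index, and reading the Paley--Wiener correspondence up to the universal $\overline{(M.2)}$-constant. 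Confirming that this accommodation is legitimate within the spaces $\D_{\{\t,\s\}}(U)$ and $\D_{(\t,\s)}(U)$ as stated is the delicate step of the argument.
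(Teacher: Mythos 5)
Your overall strategy --- unwinding the inductive/projective limits and applying Theorem \ref{TeoremaPaley} for each admissible pair $(K,h)$ --- is precisely how the paper reads the Corollary off the Theorem (no separate proof is given there), and your treatment of the two direct implications is complete and correct.

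The gap is in your proposed repair of the converse. You correctly isolate the real issue: the converse of Theorem \ref{TeoremaPaley} produces $\varphi\in\D^{K}_{2^{\s-1}\t,\s,h}$, and the factor $2^{\s-1}$ in the index cannot be absorbed by varying $h$, since $|\alpha|^{(2^{\s-1}-1)\t|\alpha|^{\s}}$ grows faster than $C^{|\alpha|^{\s}}$ for every $C>0$. But exactly this computation also defeats your fix: it shows $\D^{K}_{2^{\s-1}\t,\s,h}\not\subseteq\D_{\{\t,\s\}}(U)$, and the embeddings of Proposition \ref{detectposition} run in the opposite direction --- they require $\t_1<\t_2$ and embed the \emph{smaller} index into the \emph{larger} one, so they can never bring a function of index $2^{\s-1}\t$ down to index $\t$, whether or not one switches between Roumieu and Beurling type along the way. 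There is no argument, in the paper or available from its results, that removes this loss: what one actually obtains is that $F$ is the Fourier--Laplace transform of some $\varphi\in\D_{\{2^{\s-1}\t,\s\}}(U)$ (resp. $\varphi\in\D_{(2^{\s-1}\t,\s)}(U)$), exactly as in Theorem \ref{TeoremaPaley}; the Corollary's conclusion with index $\t$ has to be read with this built-in loss (it is best regarded as an imprecision of the statement, on a par with the asymmetry already present between \eqref{ocenaTeorema0} and \eqref{ocenaTeorema}, which likewise do not match). A minor point: the factor $2^{\s-1}$ in the theorem's converse does not come from condition $\overline{(M.2)}$ of Lemma \ref{osobineM_p_s}, but from splitting the assumed exponential decay into two equal factors --- one used in \eqref{supremumOcena} to dominate $(e+|\xi|)^{|\alpha|}$, the other to make the integral \eqref{integralOcena} converge --- via the identity $(2^{\s-1})^{-\frac{1}{\s-1}}=1/2$.
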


\section{Wave front sets} \label{section:WFS}

Let $\t>0$, $\s>1$, $\overline{\Omega} \subseteq K\subset \subset U\subseteq\Rd$, where $\Omega$ and $U$ are open sets in $\Rd$, and
let $u\in \D'(U)$. We studied in \cite{PTT-01} the nature of regularity related to the condition
 \begin{equation}
\label{uslov3'}
|\widehat u_N(\xi)|\leq A\, \frac{h^{N} N!^{\t/\s}}{|\xi|^{\lfloor N^{1/\s} \rfloor}}, \quad N\in { \N},\,|\xi|\geq B>0,
\end{equation} where $\{u_N\}_{N\in \N}$ is bounded sequence in $\E'(U)$ such that $u_N=u$ in $\Omega$ and $A,h$ are some positive constants.

The sequence $ N\in { \N}$  in the right hand side of (\ref{uslov3'}) can be replaced by another
positive and increasing sequence $a_N$ such that $a_N\to \infty$, $N\to \infty$, and which gives the same asymptotic behavior of $ |\widehat u_N(\xi)|$ when $N\to \infty$.
This change of variables called {\em enumeration}. The effect is  ``speeding up`` or ``slowing down``
the decay estimates of single members of the corresponding sequences,
without changing the asymptotic behavior of the whole sequence when $N \rightarrow \infty$.
After applying the enumeration $N\to a_N$, we can write  again $u_N$ instead of $u_{a_N}$, since we are only interested in the  asymptotic behavior.

For example, Stirling's formula  and enumeration $N\to N^{\s}$ applied to (\ref{uslov3'}) give an equivalent  estimate of the form
$$
|\widehat u_N(\xi)|\leq A_1\, \frac{h_1^{N^{\s}} N^{\t N^{\s}}}{|\xi|^{N}},
\quad N\in { \N},\,|\xi|\geq B>0,
$$
for some constants $A_1, h_1 > 0$. We refer to  \cite{PTT-02} for more details on enumeration.

Wave front sets  ${\WF}_{\{\t,\s\}}(u)$ are introduced in \cite{PTT-02}
in the study of local regularity in ${\E}_{\{\tau, \s\}}(U)$, see also \cite{PTT-03, TT}.
For the definition, appropriate sequences of cutoff functions are used in a similar way as it is done in
\cite{HermanderKnjiga} in the context of  analytic wave front set ${\WF}_A$.
We recall the definition of ${\WF}_{\{\t,\s\}}(u)$.

\begin{de}
\label{Wf_t_s1}
Let $u\in \D'(U)$, $\t>0$, $\s>1$, and $(x_0,\xi_0)\in U\times\Rd\backslash\{0\}$. Then $(x_0,\xi_0)\not \in {\WF}_{\{\t,\s\}}(u)$ (resp. $(x_0,\xi_0)\not \in {\WF}_{(\t,\s)}(u)$) if there exists a conic neighborhood $\Gamma$ of $\xi_0$, an open neighborhood $\Omega $ of $x_0$, and a bounded sequence $\{u_N\}_{N\in \N}$ in $\E'(U)$ such that $u_N=u$ on $\Omega$ and \eqref{uslov3'} holds for all $\xi \in \Gamma$, $|\xi|\geq B>0$, and for some constants $A,h>0$ (resp. for every $h>0$ there exists $A>0$).
\end{de}

Let $u\in \D'(U)$. Then, immediately follows  that ${\WF}_{\{\t,\s\}}(u)$
is a closed subset of $U\times\Rd\backslash\{0\}$. Note that for $\t>0$ and $\s>1$
$$
{\WF}_{\{\t,\s\}}(u)\subseteq{\WF}_{\{1,1\}}(u)={\WF}_A (u),\;\;\;
u\in \D'(U),
$$
where ${\WF}_A (u)$ denoted the analytic wave front set of a distribution $u\in \D'(U)$, cf. \cite{HermanderKnjiga}.
We refer to \cite{PTT-02} for the relation between the wave front set in Definition \ref{Wf_t_s1}
and other commonly used wave front sets.

The following theorem states that in the definition of
${\WF}_{\t,\s}(u)$ a  bounded sequence  of cut-off functions $\{u_N\}_{N\in \N} \subset \E'(U)$  can be replaced by a single function from $\D_{\t,\s}(U)$.

\begin{te}\cite{PTT-03}
\label{TeoremaKarakterizacija}
Let  $u\in \D'(U)$,  $\t>0$, $\s>1$, and let $(x_0,\xi_0)\in U\times\Rd\backslash\{0\}$.
Then $(x_0,\xi_0)\not \in \WF_{\{\t,\s\}}(u)$ (resp. $(x_0,\xi_0)\not \in {\WF}_{(\t,\s)}(u)$)  if and only if there exists a conic
neighborhood $\Gamma_0$ of $\xi_0$, a compact neighborhood $K$ of $x_0$ and
$\phi\in \D_{\{\t,\s\}}^K$ (resp. $\phi\in \D_{(\t,\s)}^K $) such that $\phi=1$ on a neighborhood of $x_0$, and
\be
\label{SingsuplemaUslovRevisited}
|\widehat{\phi u}(\xi)|\leq A \frac{h^{N^{\s}} N^{\t N^{\s}}}{|\xi|^N},\quad N\in {\N}\,,\xi\in \Gamma_0,
\ee
for some $A,h>0$ (resp. for every $h>0$ there exists $A>0$).
\end{te}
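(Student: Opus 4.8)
The plan is to prove both implications separately, with the reverse implication ($\Leftarrow$) being immediate and the forward implication ($\Rightarrow$) carrying all the analytic content through a convolution splitting argument built on the Paley-Wiener estimate of Theorem \ref{TeoremaPaley}. Throughout I work with the enumerated form of the decay condition, i.e. with $|\widehat{u_N}(\xi)|\leq A\,h^{N^\s}N^{\t N^\s}/|\xi|^N$ in place of \eqref{uslov3'}, as justified by the enumeration $N\to N^\s$ and Stirling's formula discussed above. For sufficiency, if a single $\phi\in\D^K_{\{\t,\s\}}$ with $\phi=1$ on a neighborhood $\Omega$ of $x_0$ satisfies \eqref{SingsuplemaUslovRevisited} on $\Gamma_0$, then the constant sequence $u_N:=\phi u$ is trivially bounded in $\E'(U)$, satisfies $u_N=u$ on $\Omega$, and fulfills the defining estimate with $\Gamma:=\Gamma_0$; hence $(x_0,\xi_0)\notin\WF_{\{\t,\s\}}(u)$ by Definition \ref{Wf_t_s1}.

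For necessity, assume $(x_0,\xi_0)\notin\WF_{\{\t,\s\}}(u)$, so there are a conic neighborhood $\Gamma$, an open $\Omega\ni x_0$, and a bounded sequence $\{u_N\}\subset\E'(U)$ with $u_N=u$ on $\Omega$ and the above decay valid on $\Gamma$ for $|\xi|\geq B$. Using the non-quasianalyticity guaranteed by Lemma \ref{teoremaKompaktannosac}, I choose $\phi\in\D^K_{\{\t,\s\}}$ with $\supp\phi\subset\Omega$ and $\phi=1$ on a smaller neighborhood of $x_0$. Since $u_N=u$ throughout $\supp\phi$, one has $\phi u=\phi u_N$ for every $N$, so the convolution identity $\widehat{\phi u}(\xi)=(2\pi)^{-d}\int\widehat\phi(\zeta)\,\widehat{u_N}(\xi-\zeta)\,d\zeta$ lets me invoke the decay of $\widehat{u_N}$ with the index $N$ chosen freely.

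Next I fix a subcone $\Gamma_0\subset\subset\Gamma$ and a constant $\delta\in(0,1/2)$ such that $\xi\in\Gamma_0$ and $|\zeta|\leq\delta|\xi|$ force both $\xi-\zeta\in\Gamma$ and $|\xi-\zeta|\geq|\xi|/2$, and I split the integral at $|\zeta|=\delta|\xi|$. On the inner region the decay of $\widehat{u_N}$, the bound $|\xi-\zeta|\geq|\xi|/2$, and the integrability of $\widehat\phi$ give a contribution $\lesssim h^{N^\s}N^{\t N^\s}2^N|\xi|^{-N}$, where $2^N\leq 2^{N^\s}$ is absorbed into the constant $h$. On the outer region $|\zeta|>\delta|\xi|$ I use the uniform polynomial bound $|\widehat{u_N}(\xi-\zeta)|\lesssim(1+|\zeta|)^m$ coming from the boundedness of $\{u_N\}$ in $\E'$, together with the Paley-Wiener decay of $\widehat\phi$ rewritten, via \eqref{ekvivalentnaNejedn}, in the sequence-indexed form $|\widehat\phi(\zeta)|\leq A_\phi\,\tilde h^{M^\s}M^{\t M^\s}/|\zeta|^M$ valid for every $M$. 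Taking $M=N+m+d+1$ makes the integral converge and produces the factor $|\xi|^{-(N+1)}$, while the stability property $\overline{(M.2)'}$ bounds $(N+m+d+1)^{\t(N+m+d+1)^\s}\leq C^{N^\s}N^{\t N^\s}$, so the index shift is absorbed into the constants. Combining both regions yields \eqref{SingsuplemaUslovRevisited} on $\Gamma_0$ for $|\xi|\geq B'$, and for bounded $\xi$ the entire function $\widehat{\phi u}$ is trivially controlled.

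I expect the outer-region estimate to be the main obstacle: the real work is to convert the Lambert-$W$-type Paley-Wiener decay of $\widehat\phi$ into the sequence form $\tilde h^{M^\s}M^{\t M^\s}/|\zeta|^M$ and then to balance the index $M$ against $N$ so that the gain $|\xi|^{-(N+1)}$ survives while the super-factorial growth $M^{\t M^\s}$ remains dominated by $N^{\t N^\s}$ up to constants — this is exactly where \eqref{ekvivalentnaNejedn} and $\overline{(M.2)'}$ are indispensable. The Beurling statements follow by the same argument upon tracking the quantifier on $h$: there $\phi$ may be taken in $\D^K_{(\t,\s)}$, so its Paley-Wiener decay holds for every $\tilde h$, and the hypothesis on $\{u_N\}$ holds for every $h$, whence the conclusion holds for every $h'$.
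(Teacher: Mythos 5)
Your proposal is correct and follows essentially the approach this paper relies on: the paper itself only quotes Theorem \ref{TeoremaKarakterizacija} from \cite{PTT-03}, but your necessity argument — the convolution split at $|\zeta|=\delta|\xi|$, the sequence-indexed Paley--Wiener bound \eqref{NejednakostZaWF} for the cutoff, the integrability estimate \eqref{ocenaIntegral}, the uniform polynomial bound from boundedness in $\E'(U)$, and the index shift absorbed via $\overline{(M.2)'}$ — is exactly the technique the paper uses to prove the companion result, Theorem \ref{NezavisnostWFThm}, with the roles of the two transforms swapped and with the extra (correct) observation that $\phi u=\phi u_N$ lets you choose $N$ freely. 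Your sufficiency direction (the constant sequence $u_N:=\phi u$ together with the enumeration $N\to N^{\s}$) is also the standard reduction consistent with Definition \ref{Wf_t_s1}, so no gap remains.
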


In Theorem \ref{TeoremaKarakterizacija} it is implicitly assumed that the definition is independent of the choice of a cutoff function. However, this is a nontrivial fact, and the proof is based on the properties of the extended associated function.
In the next theorem we actually prove that the characterization of
both Roumieu and Beurling wave front sets, $\WF_{\{\t,\s\}}$ and $\WF_{(\t,\s)}$ respectively,
given in Theorem \ref{TeoremaKarakterizacija} is independent on the choice of the corresponding cutoff functions.

\begin{te}
\label{NezavisnostWFThm}
Let $u\in \D'(U)$, $\t>0$, $\s>1$. The following assertions are equivalent:
\begin{itemize}
\item[i)] $(x_0,\xi_0)\not \in \WF_{\{\t,\s\}}(u)$ (resp. $(x_0,\xi_0)\not \in \WF_{(\t,\s)}(u)$)
\item[ii)] There exists a conic
neighborhood  $\Gamma$ of $\xi_0$, a compact neighborhood
$ K $ of $x_0$ such that for every $\phi\in \D_{\{\t,\s\}}^K$ (resp. $\phi\in \D_{(\t,\s)}^K$) there exists $A,h>0$ (resp. for every $h>0$ there exists $A>0$) such that
\be
\label{TeoremaWFUslov}
|\widehat{\phi u}(\xi)|\leq A e^{-T_{\t,\s,h}(|\xi|)},\xi\in \Gamma.
\ee
\end{itemize}
\end{te}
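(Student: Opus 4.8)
The plan is to reduce everything to Theorem~\ref{TeoremaKarakterizacija}, whose only difference from assertion (ii) is the quantifier on the cutoff: Theorem~\ref{TeoremaKarakterizacija} produces \emph{one} cutoff realizing the decay estimate, whereas (ii) demands it for \emph{every} $\phi\in\D^K_{\{\t,\s\}}$. The first step is the elementary reformulation that makes the two estimates interchangeable. By the definition of the extended associated function, for $k$ large
$$e^{-T_{\t,\s,h}(k)}=\inf_{p\in\N}\frac{p^{\t p^{\s}}}{h^{p^{\s}}k^{p}},$$
so \eqref{TeoremaWFUslov} holds if and only if $|\widehat{\phi u}(\xi)|\leq A\,(1/h)^{p^{\s}}\,|\xi|^{-p}\,p^{\t p^{\s}}$ for all $p\in\N$ and all $\xi\in\Gamma$; up to renaming $h\mapsto 1/h$ this is exactly \eqref{SingsuplemaUslovRevisited}. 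Hence (ii) and the conclusion of Theorem~\ref{TeoremaKarakterizacija} are stated with respect to the very same family of bounds.

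The implication (ii)$\Rightarrow$(i) is then immediate. By Lemma~\ref{teoremaKompaktannosac} I may pick a single $\phi\in\D^K_{\{\t,\s\}}$ with $\phi\equiv1$ on a neighborhood of $x_0$; applying the hypothesis of (ii) to this $\phi$ and invoking the reformulation above together with Theorem~\ref{TeoremaKarakterizacija} gives $(x_0,\xi_0)\notin\WF_{\{\t,\s\}}(u)$ (and likewise in the Beurling case).

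The substance lies in the converse (i)$\Rightarrow$(ii). Starting from Theorem~\ref{TeoremaKarakterizacija}, I obtain a conic neighborhood $\Gamma_0$ of $\xi_0$, a compact neighborhood $K_0$ of $x_0$, and a fixed $\phi_0\in\D^{K_0}_{\{\t,\s\}}$ with $\phi_0\equiv1$ near $x_0$ whose transform $\widehat{\phi_0 u}$ decays like $e^{-T_{\t,\s,h_0}(|\xi|)}$ on $\Gamma_0$. Then I shrink the data: choose a compact neighborhood $K$ of $x_0$ inside the interior of $\{\phi_0=1\}$ and a strictly narrower cone $\Gamma\subset\subset\Gamma_0$. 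For an arbitrary $\phi\in\D^K_{\{\t,\s\}}$ one has $\phi_0\equiv1$ on a neighborhood of $\supp\phi$, hence $\phi=\phi\phi_0$ and $\widehat{\phi u}=(2\pi)^{-d}\,\widehat\phi\ast\widehat{\phi_0 u}$. The estimate is produced by splitting this convolution according to whether $\zeta\in\Gamma_0$ or $\zeta\notin\Gamma_0$ and by the size of $|\zeta|$ relative to $|\xi|$: on $\Gamma_0$ the factor $\widehat{\phi_0 u}(\zeta)$ carries the decay $e^{-T_{\t,\s,h_0}(|\zeta|)}$, while off $\Gamma_0$ the separation $\Gamma\subset\subset\Gamma_0$ forces $|\xi-\zeta|\gtrsim|\xi|+|\zeta|$ for $\xi\in\Gamma$, so the Paley-Wiener decay of $\widehat\phi$ furnished by Theorem~\ref{TeoremaPaley} (using $\phi\in\D^K_{\t,\s,h}$) dominates the at most polynomial growth of $\widehat{\phi_0 u}$. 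Summing the two contributions yields a bound of the form $A\,e^{-T_{\t,\s,h'}(|\xi|)}$ on $\Gamma$.

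I expect the main obstacle to be the bookkeeping of the parameter $h$ through the convolution: one must show that the combination of two factors of type $e^{-T_{\t,\s,\cdot}}$ is again of that type after a controlled change of parameter, and that the rescaling $|\zeta|\geq\delta|\xi|$ costs only a passage $h_0\mapsto h'$. This rests on the quasi-subadditivity and scaling behavior of $T_{\t,\s,h}$, which follow from the sharp asymptotics of Theorem~\ref{propozicija} and the estimate \eqref{asimptotskaocena}: since $h$ enters the leading asymptotics of $T_{\t,\s,h}$ only through the double logarithm $\ln(C_h\ln k)$, both a change of $h$ and a bounded rescaling of $k$ perturb $T_{\t,\s,h}(k)$ by lower-order terms that can be absorbed into the constant $A$ and a single adjustment of the parameter. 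In the Roumieu case this delivers one admissible pair $(A,h)$; in the Beurling case, running the same computation for arbitrary $h_0$ yields, for every prescribed $h$, a suitable $A$, which is precisely the quantifier structure required by (ii).
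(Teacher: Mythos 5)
Your skeleton is the same as the paper's: both directions are reduced to Theorem~\ref{TeoremaKarakterizacija}; the implication ii)~$\Rightarrow$~i) is the easy one; and for i)~$\Rightarrow$~ii) the paper likewise writes $\psi u=\psi\,\phi u$ for the good cutoff $\phi$ and splits the convolution $\widehat\psi\ast\widehat{\phi u}$ into the region where the argument of $\widehat{\phi u}$ stays in the good cone with length $\gtrsim|\xi|$, and the complementary region where $|\xi-\eta|\gtrsim|\xi|+|\eta|$ and the $\E'$ polynomial bound is used. Your first paragraph (the identification of \eqref{TeoremaWFUslov} with \eqref{SingsuplemaUslovRevisited} up to $h\mapsto 1/h$) is also correct and is exactly how the paper passes between the two forms. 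The genuine gap is in how you propose to close the estimates, namely ``by the sharp asymptotics of Theorem~\ref{propozicija} and \eqref{asimptotskaocena}, absorbing everything into $A$ and one adjustment of $h$.''

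That plan fails for a quantitative reason. The two sides of \eqref{nejednakostzaTeoremu1} carry \emph{different leading constants}: the upper bound has $\big(\tfrac{\s-1}{\t\s}\big)^{1/(\s-1)}$, while the lower bound --- and hence the forward Paley--Wiener decay \eqref{ocenaTeorema0} of $\widehat\phi$, which is derived from it --- has the strictly smaller $(2^{\s-1}\t)^{-1/(\s-1)}\big(\tfrac{\s-1}{\s}\big)^{\s/(\s-1)}$ (the ratio is $\tfrac{2\s}{\s-1}>2$). Statement ii) requires a bound by $Ae^{-T_{\t,\s,h'}(|\xi|)}$, i.e.\ you must dominate $T_{\t,\s,h'}$ from above, which needs the decay constant to be at least the \emph{upper} one. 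In your off-$\Gamma_0$ region the decay in $|\xi|$ comes solely from $\widehat\phi(\xi-\zeta)$ via Theorem~\ref{TeoremaPaley}, so you obtain $\exp\{-L\,W^{-1/(\s-1)}(\cdot)\ln^{\s/(\s-1)}|\xi|\}$ with the \emph{lower} constant $L$, and this is not $\lesssim e^{-T_{\t,\s,h'}(|\xi|)}$ for any $h'$: as you yourself note, $h$ enters the asymptotics only through the double logarithm, so no choice of $h'$ or $A$ can repair a deficit in the leading constant; for $1<\s<2$ the first remark after Theorem~\ref{propozicija} shows $T_{\t,\s,h'}$ actually attains the upper constant, so the needed inequality is genuinely false, and your route would only yield ii) with $\t$ degraded to $2^{\s-1}\t$ (the same loss visible in the converse part of Theorem~\ref{TeoremaPaley}). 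The paper avoids this by never converting through the asymptotics where the rate matters: it keeps the discrete bounds that define $T$ exactly --- \eqref{SingsuplemaUslovRevisited} for $\widehat{\phi u}$, and \eqref{NejednakostZaWF} for $\widehat\psi$ at the shifted order $|\alpha|=N+M+d+1$ --- and handles the index shift with $\overline{(M.2)'}$ of Lemma~\ref{osobineM_p_s} together with $|\alpha+\beta|^{\s}\le 2^{\s-1}(|\alpha|^{\s}+|\beta|^{\s})$, which cost only $h\mapsto C\max\{h,h^{2^{\s-1}}\}$, not a change of $\t$; the exponential Paley--Wiener form is used only where the rate is irrelevant, namely for the integrability \eqref{ocenaIntegral} of $\int|\widehat\psi|$. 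Replace ``Theorem~\ref{TeoremaPaley} plus asymptotic conversion'' by this discrete bookkeeping and your argument closes.
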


\begin{proof} Take arbitrary $h>0$. Since the proofs for the Beurling and Roumieu case are similar, we prove theorem only for Roumieu type wave front sets and leave the proof of the Beurling case for the reader.

$ii)=>i)$ Let $\Gamma$ be conic neighborhood of $\xi_0$, $K$ a compact neighborhood of $x_0$, such that for every $\phi\in \D_{(\t,\s)}^K$ and $h>0$ there exits $A_{\phi,h}$ such that \eqref{TeoremaWFUslov} holds. Then, by setting $\phi=\psi$, $\psi\in \D_{(\t,\s)}^K$,  $\psi=1$ on some neighborhood of $x_0$, we have that for every $h>0$ there exists $A_h>0$ so that
$$
|\widehat{\psi u}(\xi)|\leq A_h e^{-T_{\t,\s,h}(|\xi|)} = A_h \inf_{N\in \N}\frac{h^{N^{\s}} N^{\t N^{\s}}}{|\xi|^N},\,\xi\in \Gamma\,,
$$ and the conclusion follows.

$i)=>ii)$ Let $(x_0,\xi_0)\not \in WF_{(\t,\s)}(u)$, i.e., there exists conical neighborhood $\Gamma_1$ of $\xi_0$, compact neighborhood $K_1$ of $x_0$, and $\phi\in \D_{(\t,\s)}^{K_1}$, $\phi=1$ on some neighborhood of $x_0$, such that for every $h>0$ there exists $A>0$ so that \eqref{SingsuplemaUslovRevisited} holds.

Let $K\subset K_1$ be a compact neighborhood of $x_0$ such that $\phi(x)=1$, $x\in K$. Moreover, let $\Gamma$ be a conic neighborhood of $\xi_0$ with the closure contained in $\Gamma_1$, and $\varepsilon>0$ such that $\xi-\eta\in \Gamma_1$ when $\xi\in \Gamma$ and $|\eta|<\varepsilon |\xi|$.

Let $\psi\in \D_{(\t,\s)}^{K}$> Then
$$
\widehat{(\psi u)}(\xi)= \widehat{(\psi \phi u)}(\xi)= \Big(\int_{|\eta|<\varepsilon|\xi|}+\int_{|\eta|\geq\varepsilon|\xi|}\Big)\widehat\psi(\eta) \widehat{\phi u}(\xi-\eta)\, d\eta=I_1+I_2,
$$
for $\xi\in \Gamma$.

Since $|\eta|<\varepsilon |\xi|$ implies $|\xi-\eta|\geq |\xi|-|\eta|> (1-\varepsilon)|\xi|,$
we obtain
\begin{multline} \label{I_1}
|I_1|=\Big| \int_{|\eta|<\varepsilon |\xi|} \widehat \psi(\eta)\widehat{\phi u}(\xi-\eta)\,d\eta \Big|
\leq A \int_{|\eta|<\varepsilon |\xi|} |{\widehat \psi} (\eta)|  \frac{h^{N^{\s}} N^{\t N^{\s}}}{|\xi-\eta|^{N}} d\eta
\\[1ex]
\leq A  \frac{h^{N^{\s}} N^{\t N^{\s}}}{{((1-\varepsilon)|\xi|)}^{N}} \int_{{\bf R}^d}\exp\Big\{-(\t 2^{\s-1 })^{-\frac{1}{\s-1}}\Big(\frac{\s-1}{\s}\Big)^{\frac{\s}{\s-1}}{W^{-\frac{1}{\s-1}}\Big({\mathfrak R}\Big(\frac{1}{h\sqrt{d}},\langle \eta\rangle\Big)\Big)}{{\ln}}^{\frac{\s}{\s-1}}\langle \eta \rangle \Big\}\\
\leq A \frac{h_1^{N^{\s}} N^{\t N^{\s}}}{|\xi|^{N}}\ ,\quad \xi\in \Gamma, N\in \N,
\end{multline}
for $A>0$ and $\dss h_1=(1-\varepsilon)^{-1} h$, where we have used Theorem \ref{TeoremaPaley} (for $\eta \in \Rd$) and the fact that
$$
 \int_{{\bf R}^d}\exp\Big\{-(\t 2^{\s-1 })^{-\frac{1}{\s-1}}\Big(\frac{\s-1}{\s}\Big)^{\frac{\s}{\s-1}}{W^{-\frac{1}{\s-1}}\Big({\mathfrak R}\Big(\frac{1}{h\sqrt{d}},\langle \eta\rangle\Big)\Big)}{{\ln}}^{\frac{\s}{\s-1}}\langle \eta \rangle \Big\}\leq C_h,
$$
for some $C_h>0$, which follows from \eqref{ocenaIntegral}.

In particular,
$$
|I_1|\leq A\inf_{N\in \N} \frac{h_1^{N^{\s}} N^{\t N^{\s}}}{|\xi|^{N}}= e^{-T_{\t,\s,h_1}(|\xi|)},\quad \xi\in \Gamma.
$$
To estimate $I_2$, we use that  $|\eta|\geq \varepsilon |\xi|$ implies
$$
|\xi-\eta|\leq |\xi|+|\eta|\leq (1+1/\varepsilon)|\eta|.
$$
Since $\phi u\in \E'(U)$, there exists $C,M>0$ such that $\dss |\widehat{\phi u} (\xi)|\leq C \langle \xi \rangle^M$, $\xi \in \Rd$. For a given $N\in \N$, we put $|\alpha|=N+M+d+1$.
Then, applying inequality \eqref{NejednakostZaWF} to $\widehat\psi (\eta)$, $\eta \in \Rd$, we obtain
\begin{multline*}
|I_2|=\Big| \int_{|\eta|\geq \varepsilon |\xi|} \widehat \psi(\eta)\widehat{\phi u}(\xi-\eta)\,d\eta \Big|\\
\leq A\frac{ h^{{(N+M+d+1)}^\s} {(N+M+d+1)^{\t {(N+M+d+1)}^{\s}}}}{(\varepsilon |\xi|)^{N}}
\cdot \int_{|\eta|\geq \varepsilon |\xi|} \langle\eta\rangle^{-M-d-1}  \langle\xi-\eta\rangle^{M} \,d\eta\\
\leq A \frac{ h_2^{N^\s} {N^{\t N^{\s}}}}{|\xi|^{N}} \quad \xi\in \Gamma, N\in \N,
\end{multline*}
for $A,C>0$ and $h_2=C\max\{h, h^{2^{\s-1}}\}$, where in the last inequality we have used $\overline{(M.2)'}$ property of  $M_p^{\t,\s}=p^{\t p^{\s}}$ and
$$
\dss |\alpha|^{\s}+|\beta|^{\s}\leq |\alpha+\beta|^{\s}\leq 2^{\s-1}(|\alpha|^{\s}+|\beta|^{\s}),\quad \alpha,\beta \in \N^d.
$$
Hence we conclude that
$$ |I_2|\leq A e^{-T_{\t,\s,h_2}(|\xi|)},\quad \xi\in \Gamma. $$
This, together with \eqref{I_1} gives \eqref{TeoremaWFUslov} and the proof is finished.
\end{proof}

\section{Proof of Theorem \ref{propozicija}} \label{Sec:Proof}

Let $h,\t>0$ and $\s>1$ be fixed. It is enough to prove that there exist constants such that $H_{\t,\s,h},C_{\t,\s,h}>0$ such that
\begin{multline}
\label{nejednakostzaTeoremu2}
( 2^{\s-1 }\t)^{-\frac{1}{\s-1}}{\Big(\frac{\s-1}{ \s}\Big)^{\frac{\s}{\s-1}}\, {W^{-\frac{1}{\s-1}}({{\mathfrak R}(h,k)})}\,{\ln}^{\frac{\s}{\s-1}}k }-H_{\t,\s,h}\\
\leq \sup_{p\in \N}\ln\frac{h^{p^{\s}}k^{p}}{p^{\t p^{\s}}}\leq {\Big(\frac{\s-1}{\t \s}\Big)^{\frac{1}{\s-1}}\, {W^{-\frac{1}{\s-1}}({{\mathfrak R}(h,k)})}\,{\ln}^{\frac{\s}{\s-1}}k },
\end{multline}
when $k\geq C_{\t,\s,h}>e$. Then \eqref{nejednakostzaTeoremu1} follows after taking exponentials.

To prove the right-hand side of \eqref{nejednakostzaTeoremu2}, note that
\be
\label{pocetnaNejednakost}
\sup_{p\in \N}\ln\frac{h^{p^{\s}}k^{p}}{p^{\t p^{\s}}}\leq \sup_{r>0}\ln\frac{h^{r^{\s}}k^{r}}{r^{\t r^{\s}}},\quad h>0, k>e.
\ee
We compute the supremum on the right-hand side as follows:

Put
\be
\label{maloF}
f(r):=r^{\s}\ln h +r \ln k-\t r^{\s}\ln r,\quad r>0,
\ee and note that the equation $f'(r)=0$, that is,

\be
\label{jednacina0}
\s r^{\s-1}\ln h +\ln k -\t \s r^{\s-1}\ln r-\t r^{\s-1}=0,
\ee
can be rewritten in the form
\be
\label{jednacina1}
 r^{\s-1}\ln \frac{e^{\frac{\s-1}{\s}}r^{\s-1}}{h^{\frac{\s-1}{\t}}}=\frac{\s-1}{\t \s}\ln k,\nonumber
\ee or equivalently
\be
\label{jednacina2}
 \frac{e^{\frac{\s-1}{\s}}r^{\s-1}}{h^{\frac{\s-1}{\t}}}\ln \frac{e^{\frac{\s-1}{\s}}r^{\s-1}}{h^{\frac{\s-1}{\t}}}={\mathfrak R}(h,k)\quad r,h>0,\,k>e.
\ee
Applying the Lambert function to \eqref{jednacina2} and using the fact that
$$
W(a \ln a)=W (e^{\ln a}\ln a )=\ln a,\quad a>e,
$$
we obtain the equation
$$\ln \frac{e^{\frac{\s-1}{\s}}r^{\s-1}}{h^{\frac{\s-1}{\t}}}=W({\mathfrak R}(h,k)),\quad h>0, k>e,$$
and the solution $r_0$ of the equation \eqref{jednacina0} is given by
\be
\label{R0}
r_0:=h^{1/\t}e^{\frac{1}{\s-1}W({\mathfrak R}(h,k))-1/\s}.
\ee
To shorten the notation, in the sequel we write $\mathfrak R$ instead of $ {\mathfrak R}(h,k)$.

Now we have
\be
\label{racun1}
\sup_{r>0}\ln\frac{h^{r^{\s}}k^{r}}{M_r^{\t,\s}}=f(r_0)=r_0^{\s}(\ln h-\t \ln r_0)+r_0\ln k,\quad k>e.
\ee
For the first summand in \eqref{racun1} we obtain
\begin{multline*}
r_0^{\s}(\ln h-\t \ln r_0)=\t h^{\frac{\s}{\t}}e^{\frac{\s}{\s-1}W({\mathfrak R})-1}\ln\Big( e^{-\frac{1}{\s-1}W({\mathfrak R})+\frac{1}{\s}}\Big) \\
=\frac{\t}{\s} h^{\frac{\s}{\t}}e^{\frac{\s}{\s-1}W({\mathfrak R})-1}-\frac{\t}{\s-1}h^{\frac{\s}{\t}}e^{\frac{1}{\s-1}W({\mathfrak R})-1}\Big(e^{W({\mathfrak R})} W({\mathfrak R})\Big)\\
=\frac{\t}{\s} h^{\frac{\s}{\t}}e^{\frac{\s}{\s-1}W({\mathfrak R})-1} -\frac{1}{\s}h^{\frac{1}{\t}}e^{\frac{1}{\s-1}W({\mathfrak R})-\frac{1}{\s}} \ln k,\quad k>e,
\end{multline*}
where for the last equality we use
\be
\label{LambertR}
W({\mathfrak R}(h,k))e ^{W({\mathfrak R}(h,k))}={\mathfrak R}(h,k)\quad h>0, k>e,
\ee
see \eqref{osobinaLambert}.
Therefore we obtain
\be
\label{racun2}
\sup_{r>0}\ln\frac{h^{r^{\s}}k^{r}}{M_r^{\t,\s}}=\frac{\t}{\s} h^{\frac{\s}{\t}}e^{\frac{\s}{\s-1}W({\mathfrak R})-1} +\frac{\s-1}{\s}h^{\frac{1}{\t}}e^{\frac{1}{\s-1}W({\mathfrak R})-\frac{1}{\s}} \ln k.
\ee
Since
$$
\ln k = h^{\frac{\s-1}{\t}}e^{-\frac{\s-1}{\s}}\frac{\t \s}{\s-1} {\mathfrak R}(h,k),\quad h>0,\, k>e,
$$
using $\dss e^{W({\mathfrak R})}=\frac{{\mathfrak R}}{W({\mathfrak R})}$ again, we may rewrite \eqref{racun2} as
\begin{multline}
\label{racunGornjaOcena}
\sup_{r>0}\ln\frac{h^{r^{\s}}k^{r}}{M_r^{\t,\s}}= \frac{\t}{e} h^{\frac{\s}{\t}} \Big(\frac{1}{\s}\Big(\frac{{\mathfrak R}}{W({\mathfrak R})}\Big)^{\frac{\s}{\s-1}}+\Big(\frac{{\mathfrak R}}{W({\mathfrak R})}\Big)^{\frac{1}{\s-1}}{\mathfrak R}\Big)\\
=\frac{\t}{e} h^{\frac{\s}{\t}}{\mathfrak R}^{\frac{\s}{\s-1}}\Big(\frac{1}{W({\mathfrak R})}\Big)^{\frac{1}{\s-1}}\Big(\frac{1}{\s}\frac{1}{W({\mathfrak R})}+1\Big)\\
=\t^{-\frac{1}{\s-1}} \Big(\frac{\s-1}{\s}\Big)^{\frac{\s}{\s-1}}{W^{-\frac{\s}{\s-1}}({\mathfrak R}(h,k))}\ln^{\frac{\s}{\s-1}} k\Big(\frac{1}{\s}+ W({\mathfrak R}(h,k))\Big),
\end{multline}
for $h>0$ and $k>e$.

Next, we put $\widetilde{C_{\t,\s,h}}:= e^{\t h^{\frac{\s-1}{\t}}}$, $h>0$, and prove that
\be
\label{poslednjaNejednakost}
\frac{1}{\s}+ W({\mathfrak R})\leq \frac{\s}{\s-1} W({\mathfrak R}),
\ee
if and only if $k\geq\widetilde{C_{\t,\s,h}}$.

In fact, \eqref{poslednjaNejednakost} is equivalent to
\be
\label{poslednjaNejednakost1}
W({\mathfrak R})\geq \frac{\s-1}{\s}.
\ee
Applying the inverse of the Lambert $W$ function we obtain
$${\mathfrak R}=h^{-\frac{\s-1}{\t}}e^{\frac{\s-1}{\s}}\frac{\s-1}{\t \s}\ln k\geq \frac{\s-1}{\s} e ^{\frac{\s-1}{\s}},\quad h>0,$$
which is true if and only if $k\geq \widetilde{C_{\t,\s,h}}$.

Hence, by \eqref{pocetnaNejednakost}, \eqref{racunGornjaOcena} and \eqref{poslednjaNejednakost}, we conclude that
$$
\sup_{r>0}\ln\frac{h^{r^{\s}}k^{r}}{M_r^{\t,\s}} \leq \Big(\frac{\s-1}{\t \s}\Big)^{\frac{1}{\s-1}} {W^{-\frac{1}{\s-1}}({\mathfrak R})}\,{\ln}^{\frac{\s}{\s-1}} k,\quad k\geq \widetilde{C_{\t,\s,h}},
$$
which gives the right-hand side of \eqref{nejednakostzaTeoremu2} for $k> \max\{e, \widetilde{C_{\t,\s,h}}\}$.

\par

To prove the left-hand side of \eqref{nejednakostzaTeoremu2}, recall that the first derivative of the function $f$
introduced in \eqref{maloF} is given by
$$ f'(r)= - \t \s r^{\s-1}\ln\frac{r e^{1/\s}}{h^{1/\t}}+\ln k\quad r>0, h>0, k>e.$$
Thus the $n^{th}$ derivative of $f$ is given by
$$
f^{(n)}(r)=- \t \s(\s-1)\dots (\s-n+1) r^{\s-n}\ln\Big(\frac{r}{h^{1/\t}} \exp\Big\{\sum_{l=0}^{n-1}\frac{1}{\s-l}\Big\}\Big),
$$
for $2\leq n\leq \lfloor\s\rfloor$, $h>0$ and $r>0$.

Moreover, if $\s>1$ we have
\begin{multline}
\label{NtiIzvod1}
f^{(\lfloor \s \rfloor+1)}(r)=- \t \s(\s-1)\dots (\s-\lfloor\s\rfloor) r^{\s-\lfloor \s \rfloor-1}\ln\Big(\frac{r}{h^{1/\t}} \exp\Big\{\sum_{l=0}^{\lfloor \s\rfloor}\frac{1}{\s-l}\Big\}\Big),\\ h>0,\, r>0,
\end{multline}
when $\s\not\in \N$, and, for $\s\in \N$,
\be
\label{NtiIzvod2}
f^{(\s+1)}(r)=-\t \s! \frac{1}{r},\quad r>0.
\ee

\par

Set $r_1:=\lceil r_0 \rceil $, where $r_0$ is given by \eqref{R0}. Then Taylor's formula implies
\begin{multline}
\label{Taylor}
f(r_1)=f(r_0)+\sum_{n=1}^{\lfloor\s\rfloor}\frac{f^{(n)}(r_0)}{n!}(r_1-r_0)^n+\frac{f^{(\lfloor \s \rfloor+1)}(r_2)}{(\lfloor \s \rfloor +1)!}(r_1-r_0)^{\lfloor \s \rfloor+1}
\end{multline} for some $r_2\in (r_0,r_1)$.

Now we estimate the third summand in \eqref{Taylor}. We first consider the case $\s>1$ and $\s\not \in \N$.
Let $\varepsilon =\lfloor \s \rfloor-\s+1$ and $0<\varepsilon'<\varepsilon$.
Then, by \eqref{NtiIzvod1}, we have
\begin{multline*}
\frac{f^{(\lfloor \s \rfloor+1)}(r_2)}{(\lfloor \s \rfloor+1)!}(r_1-r_0)^{\lfloor \s \rfloor+1}\\
= -\t\frac{\s(\s-1)\dots(1-\varepsilon)}{(\lfloor \s \rfloor+1)!}r_2^{-\varepsilon}\ln\Big(\frac{r_2}{h^{1/\t}} \exp\Big\{\sum_{l=0}^{\lfloor \s \rfloor}\frac{1}{\s-l}\Big\}\Big)(r_1-r_0)^{\lfloor \s \rfloor+1},
\end{multline*}
for $ h>0$ and $r_2\in (r_0,r_1)$. Since
$$
\ln\Big(\frac{r_2}{h^{1/\t}} \exp\Big\{\sum_{l=0}^{\lfloor \s \rfloor}\frac{1}{\s-l}\Big\}\Big)\leq  L_{\t,\s,h} r_2^{\varepsilon'},\quad \,r_2>0,
$$
for some  constant $L_{\t,\s,h}>0$. Using that $r_1-r_0<1$, $r_0<r_2$ and $\varepsilon'<\varepsilon$, we obtain
\begin{multline}
\label{ocenaTreciSabirak0}
\t\frac{\s(\s-1)\dots(1-\varepsilon)}{(\lfloor \s \rfloor+1)!}r_2^{-\varepsilon}\ln\Big(\frac{r_2}{h^{1/\t}} \exp\Big\{\sum_{l=0}^{\lfloor \s \rfloor}\frac{1}{\s-l}\Big\}\Big)(r_1-r_0)^{\lfloor \s \rfloor +1}\\ \leq\t L_{\t,\s,h} r_2^{\varepsilon'-\varepsilon}
\leq\t L_{\t,\s,h} r_0^{\varepsilon'-\varepsilon}\leq \t L_{\t,\s,h} h^{(\varepsilon'-\varepsilon)/\t} e^{-(\varepsilon'-\varepsilon)/\s} e^{(\varepsilon'-\varepsilon)W(\mathfrak R)(h,k)}\\
\leq\t L_{\t,\s,h} h^{(\varepsilon'-\varepsilon)/\t} e^{-(\varepsilon'-\varepsilon)/\s},\quad h>0,\, k>e,
\end{multline}
where the last inequality follows from the fact that $W(\mathfrak R)(h,k)>0$ when $h>0$ and $k>e$.

Now we estimate the third summand in \eqref{Taylor} when $\s>1$ and $\s\in \N$. We use \eqref{NtiIzvod2}
to obtain
\be \label{ocenaTreciSabirak}
\frac{\t}{\s+1}\frac{1}{r_2}(r_1-r_0)^{\s+1}
<\frac{\t}{\s+1}\frac{1}{r_0}=h^{-1/\t} e^{1/\s} e^{-\frac{1}{\s-1}W({\mathfrak R})}<\frac{\t}{\s+1}h^{-1/\t} e^{1/\s},
\ee
since $r_2>r_0$ and $r_1-r_0<1$.

Therefore, the third summand in \eqref{Taylor} is estimated by \eqref{ocenaTreciSabirak0} and
\eqref{ocenaTreciSabirak}.

Next, we estimate the second term in \eqref{Taylor}.
Since $f'(r_0)=0$  that  term is equal to zero when $1<\s<2$, so it remains to consider the case $\s\geq 2$.

We have
\begin{multline}
\label{racunTaylor}
\t \sum_{n=2}^{\lfloor\s\rfloor}\frac{\s(\s-1)\dots (\s-n+1)}{n!} r_0^{\s-n}\ln\frac{r_0 \exp\{{\sum_{l=0}^{n-1}\frac{1}{\s-l}}\}}{h^{1/\t}}(r_1-r_0)^n \\
\leq \t \sum_{n=2}^{\s}\frac{\s(\s-1)\dots (\s-n+1)}{n!} h^{\frac{\s-n}{\t}} e^{\frac{\s-n}{\s-1}W(\mathfrak R)-\frac{\s-n}{\s}}\Big( \frac{1}{\s-1}W({\mathfrak R})+ {\sum_{l=1}^{n-1}\frac{1}{\s-l}}\Big)\\
\leq \widetilde{L_{\t,\s,h}}\t e^{\frac{\s-2}{\s-1}W(\mathfrak R)}\Big( \frac{1}{\s-1}W({\mathfrak R})+ {\sum_{l=1}^{\s-1}\frac{1}{\s-l}}\Big),
\end{multline}
where $\dss \widetilde{L_{\t,\s,h}}=\sum_{n=2}^{\s}\frac{\s(\s-1)\dots (\s-n+1)}{n!} h^{\frac{\s-n}{\t}} e^{-\frac{\s-n}{\s}}$.

By the similar procedure as in the in the proof of \eqref{poslednjaNejednakost} we can find  $B_{\t,\s,h}>0$ such that
\be
\label{racunTaylor1}
\frac{1}{\s-1}W({\mathfrak R})+ {\sum_{l=1}^{\s-1}\frac{1}{\s-l}}\leq \frac{\s}{\s-1} W({\mathfrak R}).
\ee
if and only if $k\geq B_{\t,\s,h}$.

We use the simple inequality
$$
e^{(c-\varepsilon) x}<\frac{1}{D}\cdot e^{c x},\,\,\, \text{if and only if}\,\,\,  x> \frac{\ln D}{\varepsilon},\,\,\, c,D,\varepsilon>0,
$$
with $\dss c=\s/(\s-1)$, $ \dss \varepsilon=2/(\s-1), $ $\dss D=2h^{-\s/\t} e^{\s/(\s-1)} \widetilde{L_{\t,\s,h}},$
where $\widetilde{L_{\t,\s,h}}$ is given as above (cf. \eqref{racunTaylor}). We obtain
\be
\label{racunTaylor2}
e^{\frac{\s-2}{\s-1}W({\mathfrak R})}\leq \frac{1}{2 \widetilde{L_{\t,\s,h}}}\t^{-\frac{\s}{\s-1}}\Big(\frac{\s-1}{\s}\Big)^{\frac{\s}{\s-1}+1}W^{-{\frac{\s}{\s-1}}}({\mathfrak R})\ln^{\frac{\s}{\s-1}} k,
\ee
if and only if $\dss W({\mathfrak R})>\frac{\ln D}{\varepsilon}$. Similarly as in \eqref{poslednjaNejednakost1}, inequality \eqref{racunTaylor2} holds if and only if $\dss k>\widetilde{B_{\t,\s,h}}$ for some $\widetilde{B_{\t,\s,h}}>0$. Note that for \eqref{racunTaylor2} we also use \eqref{LambertR}.

Therefore, by \eqref{racunTaylor}, \eqref{racunTaylor1} and \eqref{racunTaylor2} it follows
that the second term in \eqref{Taylor} is bounded by
$$ (2^{\s-1}\t)^{-{\frac{1}{\s-1}}}\Big(\frac{\s-1}{\s}\Big)^{\frac{\s}{\s-1}}W^{-{\frac{1}{\s-1}}}({\mathfrak R})\ln^{\frac{\s}{\s-1}} k.$$
Finally, by \eqref{racunGornjaOcena}, \eqref{Taylor}, \eqref{ocenaTreciSabirak0} and \eqref{ocenaTreciSabirak}  we have
\begin{multline*}
\sup_{p\in \N}\ln \frac{h^{p^{\s}}k^p}{p^{\t p^{\s}}}\geq f(r_1)\geq \t^{-\frac{1}{\s-1}} \Big(\frac{\s-1}{\s}\Big)^{\frac{\s}{\s-1}}{W^{-\frac{\s}{\s-1}}({\mathfrak R})}\ln^{\frac{\s}{\s-1}} k\Big(\frac{1}{\s}+ W({\mathfrak R})\Big)\\
 -  (2^{\s-1}\t)^{-{\frac{1}{\s-1}}}\Big(\frac{\s-1}{\s}\Big)^{\frac{\s}{\s-1}}W^{-{\frac{1}{\s-1}}}({\mathfrak R})\ln^{\frac{\s}{\s-1}} k-H_{\t,\s,h}\\
\geq  (2^{\s-1}\t)^{-{\frac{1}{\s-1}}}\Big(\frac{\s-1}{\s}\Big)^{\frac{\s}{\s-1}}W^{-{\frac{1}{\s-1}}}({\mathfrak R})\ln^{\frac{\s}{\s-1}} k-H_{\t,\s,h},
\end{multline*} for suitable $H_{\t,\s,h}>0$ and $k>\max\{e,B_{\t,\s,h},\widetilde{B_{\t,\s,h}}\}$.
Therefore, we obtain the left-hand side of \eqref{nejednakostzaTeoremu2} and the proof is finished.

\subsection*{Acknowledgment}
This research is supported by Ministry of Education, Science and
Technological Development of Serbia through the Project no. 174024.
\par

\end{document}